\let\originalleft\left
\let\originalright\right
\renewcommand{\left}{\mathopen{}\mathclose\bgroup\originalleft}
\renewcommand{\right}{\aftergroup\egroup\originalright}
\newtheorem{theorem}{Theorem}[section]
\newtheorem{prop}[theorem]{Proposition}
\theoremstyle{definition}
\newtheorem{definition}{Definition}[section]
\theoremstyle{remark}
\title{Disrupting the scammer lifecycle: A dynamically-consistent numerical analysis of a compartment model for scam-victim dynamics}
\author[1]{Y.O.~Tijani\orcidlink{0000-0002-9127-7173}}
\author[2,3]{I.~Ghosh\orcidlink{0000-0001-8078-0577}}
\author[1]{S.D.~Oloniiju\orcidlink{0000-0002-9794-8580}}
\author[4]{H.O.~Fatoyinbo\,\orcidlink{0000-0002-6036-2957}\thanks{Corresponding author: \href{mailto:hammed.fatoyinbo@aut.ac.nz}{hammed.fatoyinbo@aut.ac.nz}}}
\affil[1]{Department of Mathematics, Rhodes University, Makhanda, PO Box 94, Grahamstown 6140, South Africa}
\affil[2]{School of Mathematical and Computational Sciences, Massey University, Colombo Road, Palmerston North, 4410, New Zealand}
\affil[3]{School of Mathematics and Statistics, University College Dublin, Dublin 4 D04 V1W8, Ireland}
\affil[4]{Department of Mathematical Sciences, School of Engineering, Computer and Mathematical Sciences, Auckland University of Technology, Auckland, 1010, New Zealand}
\date{\today}
\begin{document}

\maketitle

\begin{abstract}
Online deception and financial scams represent a pervasive threat in the digital age, yet a quantitative analysis and understanding of their propagation is lacking. This study introduces a novel model based on the framework of epidemiological models to describe the interaction between scammers and their victims. We propose a five-compartment deterministic model ($S-V-R-A_s-R_s$) calibrated using longitudinal data in fraud reports from the Canadian Anti-Fraud Centre. The model's theoretical properties are established, including the non-negativity of the state variables and the stability threshold defined by the basic reproduction number ($\mathcal{R}_0$). A non-standard finite difference scheme is developed for the numerical simulations to ensure dynamical consistency between the continuous deterministic model and its discrete equivalent. A key finding of the model sensitivity analysis indicates that the proliferation of scams is overwhelmingly driven by the lifecycle of scammers, their recruitment, attrition, and arrest, rather than the susceptibility of the victim population. The results of this study provide strong quantitative evidence that the most effective control strategies are those that directly disrupt the scammers' population. Overall, this study provides a crucial model for designing and evaluating evidence-based policies to combat the scourge of cybercrime. 
\end{abstract}

\section{Introduction}

The digital revolution has transformed the fabric of modern society, bringing unprecedented benefits and opportunities for global connectivity. However, this increased reliance on technology has also created a fertile ground for malicious activities, with cybercrime and scams emerging as a pervasive and evolving threat. Among the most widespread and damaging of these threats are online scams, which employ tactics ranging from phishing and social engineering to large-scale fraudulent schemes. As the boundaries of cyberspace continue to expand, the landscape of crime has shifted, with scammers exploiting vulnerabilities in digital systems, manipulating human psychology, and perpetrating sophisticated forms of online deception. These illicit activities inflict staggering financial losses on individuals and organizations globally, causing significant psychological distress and eroding trust in digital ecosystems \cite{SCAM}. Recent statistics highlight the scale and impact of cybercrime and online scams. In the first half of 2022, approximately $236.1$ million ransomware attacks occurred worldwide. The financial cost of these incidents is significant, with data breaches estimated to cost businesses an average of $\$4.88$ million in 2024. One in two American internet users had their accounts breached in 2021. The widespread nature of data exposure is also evident, as nearly $1$ billion emails were compromised in a single year, impacting one in five internet users \cite{SCAM}. Scams or cybercrime fraud fall into various thematic categories, including phishing, identity fraud, extortion, counterfeit merchandise, and false billing \cite{canadafraud2025}. Phishing is one of the most pervasive cyber threats facing businesses and individuals \cite{SCAM}. The adaptive and evolving nature of these scams presents a formidable challenge to law enforcement and cybersecurity professionals, necessitating a deeper, more systematic understanding of their underlying dynamics to develop effective countermeasures. 

To this end, mathematical modeling offers a powerful analytical lens. Drawing parallels with the spread of infectious diseases, compartmental models from epidemiology provide a robust framework for conceptualizing the propagation of scams through a population, allowing for quantitative insights into how a scam might spread, persist, or decline. Despite a growing interest in modeling the dynamics of these scams, a significant gap exists in the literature regarding continuous mathematical models. A notable exception is the study by Syniavska et al.~\cite{Syniavska}, which proposed a model for counteracting e-banking fraud using differential equations based on predator-prey dynamics, highlighting a promising yet underexplored avenue of research. The result of Syniavska et al.~\cite{Syniavska} indicates that two outcomes, a saddle point and a line of stable fixed points, are unrealistic in practice. These scenarios rely on the strict condition that each successful fraud attack generates only one new attack. In reality, the propagation rate is much higher. Therefore, the model suggests that the most plausible outcomes are a stable node and a stable degenerate node. Chikore et al.~\cite{Chikore} used a deterministic framework to analyze the propagation dynamics and potential impact of scams (cybercrime fraud). A central finding of their study was that the degree of heterogeneity in the scammer population is positively correlated with the overall level of victimization.

A comprehensive understanding of cybercrime fraud requires analyzing its co-evolution with the spread of public awareness, as modeled through the lens of scam rumors. Belhdid et al.~\cite{Belhdid} used the epidemiological approach to account for social media scam rumors. The study applies optimal control theory to assess how effectively different strategies can prevent the spread of fraudulent misinformation across social media platforms. Through computational modeling, they analyze the advantages and limitations of their proposed interventions, focusing on their capacity to enhance the reliability of digital information. The study of Nwaibeh and Chikwendu~\cite{Nwaibeh} investigates the causative impact of governmental policies in mitigating the effect of scam rumor using the susceptible-exposed-ignorant-scammers-stiflers $(S-E-I-S_{c}-S_{t})$ model. Komi~\cite{Komi} incorporated education levels into their rumor propagation model by dividing uninformed individuals into two distinct categories: those with higher education and those without formal education. Their research demonstrated that educational background significantly influences how rumors circulate through populations. Additional studies in this direction can be found in the following articles and the references therein \cite{Zanette, Tadmon, Daley, Cheng}.

To contextualize this scam discussion within broader crime research, Catalayud et al.~\cite{Catalayud} developed a mathematical framework for modeling criminal behavior patterns using Spanish crime statistics as their primary dataset. Their compartmental model consisted of three population groups: $S_{t}$ representing the overall criminal population, $L_{t}$ denoting criminals currently at liberty, and $P_{t}$ indicating the incarcerated criminal population. Catalayud et al.~\cite{Catalayud} demonstrated that this continuous mathematical model functions effectively as a predictive tool for crime trends. Nuno et al~\cite{Nuno} discussed the vulnerability of a crime-prone society. The study compared two intervention strategies: police efficiency and criminal appeal rate reduction. Park and Kim~\cite{Park} proposes a dynamic model for the interactions between different offender types. The mathematical framework is compartmental, grouping individuals by crime severity and arrest status with assumed equal transition probabilities. The study explicitly incorporates the concepts of ``broken windows effect" and ``prison as a crime school". The analysis of the system identifies how parameters like arrest rate, sentence length, and inmate contact affect the equilibrium distribution of criminals. A key counter-intuitive finding is that more extended imprisonment, without controlled inmate contact, can increase serious crime. The study also addresses optimal police resource deployment for crime control. Mathematical models have been widely applied to analyze localized crime patterns, such as re-victimization and hot spot formation, often using reaction-diffusion equations \cite{Short, Short1}. Conceptually, these frameworks are drawn from population biology, borrowing from established infectious disease \cite{McMillon} and predator-prey models \cite{Vargo, Nuno1}. This epidemiological viewpoint has likewise been extended to organized crime, where gang membership is modeled as a contagious process \cite{Nyabadza, Sooknanan}. A substantial body of research has employed continuous models to explore the impact of crime and its connections to various social factors \cite{Sooknanan}.

The novelty of this study lies in its continuous model (epidemiological approach) formulation of scam dynamics, which contributes to the growing body of knowledge on cybercrime and scams by shedding light on the complex dynamics of these threats. To the author's knowledge, this work appears to be the first to approach this study using a continuous mathematical model. This study has implications for policymakers, law enforcement, and individuals seeking to protect themselves from cyber threats, highlighting the need for a multifaceted approach to combat these evolving threats in the digital age.

The remainder of this study is structured into four subsequent sections. Section~\ref{MF_sect} addresses the mathematical formulation of the scam (cybercrime) victim dynamics. Section~\ref{MA_sect} analyzes the qualitative properties of the continuous model, specifically investigating the positivity of the model's variables and the existence and stability of equilibrium points. In Section~\ref{NSFD_sect}, the estimation of the controlling parameter for the dynamical system, based on real-life data from the Canadian Anti-Fraud Centre, is detailed. We outline the numerical technique used to approximate the continuous differential equations and present the simulation results obtained using the non-standard finite difference method. The sensitivity analysis concerning the model parameters is also presented in Section~\ref{NSFD_sect}. Section~\ref{Conc_sect} serves as the conclusion, summarizing the key findings, highlighting the study's significance, and suggesting directions for future work. 

\section{Model formulation}\label{MF_sect}

\noindent In this study, we consider the dynamics of the scam victim through deterministic ordinary differential equations. The population ($N$) is divided into five subpopulations: susceptible ($S$), victims ($V$), recovered victims ($R$), active scammers ($A_s$), and removed scammers ($R_s$), in a way that at any time $t$,
$$N(t)=S(t)+ V(t)+R(t)+A_{s}(t)+R_{s}(t).$$





 
     

\begin{figure}[h!]
    \centering
\begin{tikzpicture}[node distance=2.cm, auto, >=Stealth, font = \large]

    \node [draw, circle, minimum size=1cm, line width= 2pt, fill=blue!50] (S) {S};
    \node [draw, circle, right=of S, minimum size=1cm, line width= 2pt, fill=yellow!50] (V) {V};
    \node [draw, circle, right=of V, minimum size=1cm, line width= 2pt, fill=green!50] (R) {R};

    \node [draw, circle, below=of S,xshift=25mm, minimum size=1cm, line width= 2pt, fill=magenta!50] (As) {A\(_s\)};
    \node [draw, circle, right=of As,minimum size=1cm, line width= 2pt, fill=lime!50] (Rs) {R\(_s\)};

    \draw [->, line width = 0.7mm] (S) -- (V) node[midway, above] {$\beta$};
    \draw [->, line width = 0.7mm] (V) -- (R) node[midway, above] {$\gamma$};
    \draw [->, line width = 0.7mm] (V) -- (As) node[midway, above, xshift=3mm] {$\psi$};
    \draw [->, line width = 0.7mm] (As) -- (Rs) node[midway, above] {$\lambda$};
     \path[->, line width = 0.7mm] (R) edge[bend left=290] node[above] {$\sigma$} (S);
 
\draw [->,dashed, line width = 0.7mm] (As) -- (S) node[midway, right,color=black] {};
\path[->, line width = 0.7mm] (0.5,-3.05) edge node[swap, yshift=5.5mm, xshift=1mm] {$\delta$} (As);  
 \path[->, line width = 0.7mm]  (As) edge node[swap, yshift=-5mm,xshift=3mm] {$\mu$} +(0,-1);
     
\end{tikzpicture}
\caption{Flow diagram of the scam victim model. Solid black arrows denote the recruitment and progression pathway. The dashed black arrow denotes transmission}
\label{fig:compartments}
\end{figure}
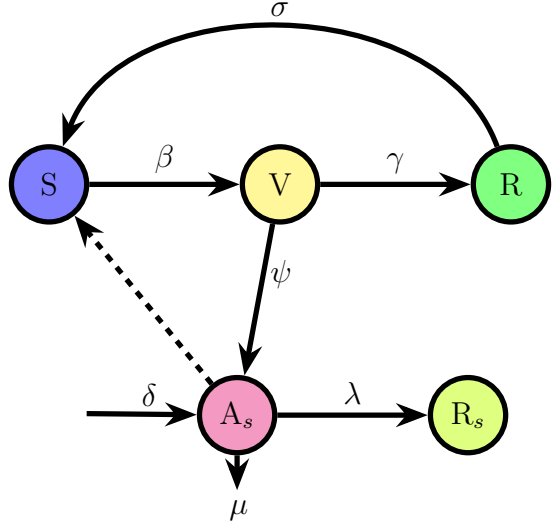

\noindent The scam pathway in the population, as described in the flow diagram in Figure~\ref{fig:compartments}, is as follows: Susceptible individuals become victims after an interaction with an active scammer. This occurs at a rate of $\beta$. The population grows further due to the possibility that the recovered individuals have been scammed, which occurs at a rate $\sigma$. Thus, the time evolution of the susceptible population is given as 
$$ \frac{dS}{dt}=-\beta S A_{s} + \sigma R.$$

The victim population arises as a result of interaction with an active scammer. Victims will leave the compartment in two ways, either as recovered individuals at a rate of $\gamma$ or as active scammers at a rate $\psi$. So, the following differential
equation describes how the victim population evolves over time
$$ \frac{dV}{dt}= \beta S A_{s} -\gamma V - \psi V.$$

The compartment for recovered individuals is populated due to awareness after the scam, this occurs at rate $\gamma$. In addition, it is assumed that the recovered individuals are not completely free of scammers; therefore, they become susceptible again at a rate $\sigma$. Thus, the recovered compartment is modeled by
$$\frac{dR}{dt}= \gamma V -\sigma R.$$

Active scammers are recruited at a rate of $\delta$ and removed in two ways: being caught by a law enforcement agent at a rate $\lambda$ or an active scammer chooses to quit the scamming activity at a rate of $\mu$. The dynamics of active scammers is given by
$$ \frac{dA_s}{dt}=\delta A_{s} -\mu A_{s} - \lambda A_{s} + \psi V. $$
Lastly, we represent the time evolution of the removed scammers by the following differential equation: 
$$\frac{dR_s}{dt}=\lambda A_{s}.$$

It follows that the scam victim model is governed by a system of ODEs as follows:
\begin{equation}
    \begin{aligned}
        \frac{dS}{dt}&=-\beta S A_{s} + \sigma R,\\
        \frac{dV}{dt}&= \beta S A_{s} -\gamma V - \psi V,\\
        \frac{dR}{dt}&= \gamma V -\sigma R,\\
        \frac{dA_s}{dt}&=\delta A_{s} -\mu A_{s} - \lambda A_{s} + \psi V, \\
        \frac{dR_s}{dt}&=\lambda A_{s},
    \end{aligned}
    \label{eq:model}
\end{equation}
with $$N=S+V+R+A_s+R_s.$$ The initial conditions at the initial time t = 0 are taken as $$S\ge 0, V\ge 0, R\ge 0, A_s \ge 0, R_s \ge 0.$$ The description of the model parameters of the scam victim model is presented in Table~\ref{tab:paramdescr}.

\begin{table}[h!]
\centering
\captionsetup{justification=centering,,margin=1cm}
\caption{Description of model parameters in \eqref{eq:model}}
 \begin{tabular}{ll}
	\hline
    \hline
	Parameter & Description  \\ \hline
	$\beta$	& Rate at which susceptible fall victim 	\\
   $\sigma$ & Rate at which recovered individuals become susceptible again \\
	$\gamma$	& Rate at which victims permanently learn and become resistant\\
    $ \psi$ & Rate at which some victims joined active scammers  \\
    $\delta$ & Scammer recruitment rate  \\
    $\mu$ &  Rate at which scammers quit  \\
    $\lambda$	& Rate at which scammers get caught by law enforcement agents \\ 
	\hline
    \hline
\end{tabular}
\label{tab:paramdescr}
\end{table}

\section{Qualitative analysis of the model}\label{MA_sect}
We now investigate the qualitative behavior of the model in \eqref{eq:model}. Specifically, we analyze the positivity and boundedness of the solutions, and identify the region in which the system dynamics is physically and mathematically well-defined. This forms a foundational step toward establishing the existence and stability of equilibria.

\subsection{Boundedness and positivity of solutions}
We begin by showing that the total population remains bounded for all time and that all state variables remain non-negative, provided the initial conditions are non-negative.

\begin{theorem}[Boundedness]
Let the total population be defined as $N(t) = S(t) + V(t) + R(t) + A_s(t) + R_s(t)$, and consider the region
\begin{align}
\label{eqn:Gamma}
\Gamma = \left\{(S, V, R, A_s, R_s) \in \mathbb{R}_+^5: 0 < N(t) \le N(0)\exp(\delta t) \right\}.
\end{align}
Then, for any non-negative initial condition $(S(0), V(0), R(0), A_s(0), R_s(0)) \in \mathbb{R}_+^5$, the solution of system~\eqref{eq:model} remains bounded in the region $\Gamma$. 
\end{theorem}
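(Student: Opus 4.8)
The plan is to collapse the five-dimensional system to a single scalar differential inequality for the total population $N(t)$. First I would add the five equations of~\eqref{eq:model}. Every transmission or transition term appears exactly once with a plus sign and once with a minus sign---namely $\pm\beta S A_s$ (flow $S\to V$), $\pm\sigma R$ (flow $R\to S$), $\pm\gamma V$ (flow $V\to R$), $\pm\psi V$ (flow $V\to A_s$), and $\pm\lambda A_s$ (flow $A_s\to R_s$)---so they cancel in pairs, leaving the exact identity
$$\frac{dN}{dt} = (\delta - \mu)\,A_s .$$

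Second, I would establish that the non-negative orthant $\mathbb{R}_+^5$ is forward-invariant for~\eqref{eq:model}. The right-hand side is polynomial, hence $C^\infty$ and locally Lipschitz, so Picard--Lindel\"of gives a unique local solution; and on each coordinate hyperplane the corresponding velocity component points inward, e.g.\ $\dot S\big|_{S=0}=\sigma R\ge 0$, $\dot V\big|_{V=0}=\beta S A_s\ge 0$, $\dot R\big|_{R=0}=\gamma V\ge 0$, $\dot A_s\big|_{A_s=0}=\psi V\ge 0$, $\dot R_s\big|_{R_s=0}=\lambda A_s\ge 0$, so by the standard barrier argument a solution issuing from $\mathbb{R}_+^5$ stays in $\mathbb{R}_+^5$ on its maximal interval of existence. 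In particular $0\le A_s(t)\le N(t)$.

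Third, I would combine these two facts. Because $\delta A_s\ge 0$, the identity for $\dot N$ becomes the closed differential inequality
$$\frac{dN}{dt} \;\le\; \delta A_s \;\le\; \delta N ,$$
so an integrating-factor (Gr\"onwall) estimate gives $N(t)\le N(0)\exp(\delta t)$; this a priori bound, together with non-negativity of the components, rules out finite-time blow-up, so the solution is global in forward time. For the lower bound in~\eqref{eqn:Gamma}, non-negativity already yields $N(t)\ge 0$, and since $\dot N=(\delta-\mu)A_s\ge -\mu A_s\ge -\mu N$ we even get $N(t)\ge N(0)\exp(-\mu t)>0$ whenever $N(0)>0$. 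Hence the trajectory remains in $\Gamma$ for all $t\ge 0$.

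I do not expect a serious obstacle: the subtle points are merely the sign bookkeeping in the summation and the order of dependence between boundedness and positivity---it is exactly the bound $A_s\le N$ (which needs all components non-negative) that turns the exact identity $\dot N=(\delta-\mu)A_s$ into the usable inequality $\dot N\le\delta N$, so positivity must be argued first, independently of any bound on $N$. One could sharpen the exponent (when $\delta\le\mu$ the total population is non-increasing), but $N(0)\exp(\delta t)$ is the natural, if slightly lossy, estimate and the one stated in the theorem.
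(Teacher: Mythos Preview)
Your proposal is correct and follows the same route as the paper: sum the five equations to obtain $\dot N=(\delta-\mu)A_s$, bound this above by $\delta N$, and apply Gr\"onwall. You are more careful than the paper in two respects---you explicitly argue positivity first (the paper defers this to the next theorem, even though the step $A_s\le N$ tacitly relies on it) and you supply the lower bound $N(t)>0$---but the core argument is identical.
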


\begin{proof}
By summing all five equations in system~\eqref{eq:model}, we obtain the governing equation for the total population:
\begin{align}
\frac{dN}{dt} = (\delta - \mu) A_s \le \delta A_s \le \delta N. \label{eq:dNdt}
\end{align}
This inequality \eqref{eq:dNdt} is a linear differential inequality for $N(t)$. Its solution satisfies $N(t) \le N(0) \exp(\delta t)$, which shows that $N(t)$ is bounded above in the region $\Gamma$ for all $t \ge 0$.
\end{proof}

\begin{theorem}[Positivity]\label{thm:positivity}
Let the initial conditions satisfy $(S(0), V(0), R(0), A_s(0), R_s(0)) \in \mathbb{R}_+^5$. Then the solution to system~\eqref{eq:model} remains non-negative for all $t > 0$, and hence the state variables evolve within the region $\Gamma$.
\end{theorem}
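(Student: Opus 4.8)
The plan is to establish non-negativity of each state variable by showing that whenever a component of the solution reaches zero, its derivative there is non-negative, so the vector field points back into the non-negative orthant. Concretely, I would argue by contradiction: suppose the solution starts in $\mathbb{R}_+^5$ and let $t^\star = \inf\{t>0 : \min(S(t),V(t),R(t),A_s(t),R_s(t)) = 0\}$ be the first time some component vanishes (if no such time exists we are done). By continuity, at $t = t^\star$ the relevant component is zero while all others are still $\ge 0$, and I would examine the corresponding equation of \eqref{eq:model} at that instant to see that the right-hand side is non-negative, contradicting that the component is about to become negative.

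The cleanest route, which I would actually write out, is to rewrite each equation so that the ``loss'' terms are proportional to the variable itself, then integrate. For instance, $\frac{dS}{dt} = -\beta S A_s + \sigma R \ge -\beta S A_s$, so $S(t) \ge S(0)\exp\!\left(-\beta\int_0^t A_s(\tau)\,d\tau\right) \ge 0$; similarly $\frac{dV}{dt} \ge -(\gamma+\psi)V$ gives $V(t) \ge V(0)e^{-(\gamma+\psi)t} \ge 0$; $\frac{dR}{dt} \ge -\sigma R$ gives $R(t) \ge R(0)e^{-\sigma t}\ge 0$; $\frac{dA_s}{dt} \ge (\delta-\mu-\lambda)A_s$ gives $A_s(t) \ge A_s(0)\exp\!\left((\delta-\mu-\lambda)t\right) \ge 0$; and $\frac{dR_s}{dt} = \lambda A_s \ge 0$ so $R_s$ is non-decreasing and hence stays non-negative. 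Each of these integrating-factor bounds is valid as long as the quantities inside the integrals are finite and continuous, which holds on the maximal interval of existence; combined with the boundedness theorem this maximal interval is all of $[0,\infty)$.

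The one subtlety — and the place I would be most careful — is circular dependence: the lower bound for $S$ uses $A_s$, the bound for $A_s$ uses nothing problematic, but the bound for $V$ is clean, while $R$'s bound is clean, and $S$'s bound uses only that $A_s$ is a continuous (hence locally bounded) function, not that it is non-negative. So the estimates can in fact be applied in the order $A_s$, then $V$, $R$, $R_s$, then $S$, with no genuine circularity; the integrating-factor argument for $S$ only needs $\int_0^t A_s\,d\tau$ to be a well-defined finite number, which follows from continuity on the existence interval. I would make this ordering explicit to preempt the objection. Finally, I would remark that strict positivity (not merely non-negativity) of, say, $S$ and $A_s$ for $t>0$ follows from the exponential lower bounds when the corresponding initial datum is positive, and then conclude that the solution remains in $\Gamma$ by combining non-negativity with the upper bound $N(t) \le N(0)\exp(\delta t)$ from the Boundedness theorem.

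The main obstacle is not any hard estimate but rather presenting the argument rigorously enough that the reader accepts the derivative-sign / integrating-factor reasoning at the boundary; I would therefore favor the explicit integrating-factor inequalities above over the ``first exit time'' phrasing, since they are self-contained and leave no gap.
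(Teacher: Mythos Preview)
Your proposal is correct and follows essentially the same approach as the paper: both define a maximal non-negativity time and then derive exponential lower bounds via integrating-factor inequalities for each compartment (the paper's bounds are exactly yours, except it drops the $\delta A_s$ term to write $A_s(t)\ge A_s(0)e^{-(\mu+\lambda)t}$), and the paper also records the boundary-derivative check $\left.\tfrac{dX}{dt}\right|_{X=0}\ge 0$ that you describe in your first paragraph. Your discussion of the ordering to avoid circularity is more careful than the paper's, which simply asserts the bounds on $[0,T]$ without addressing that point.
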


\begin{proof}
Define the maximal interval of existence for which the solution remains non-negative as
\begin{align}
T = \sup \left\{t > 0 : S(t), V(t), R(t), A_s(t), R_s(t) \ge 0 \right\}. \label{eq:sup}
\end{align}
We proceed to verify that each component of the system remains non-negative for all $t \in [0, T]$ by deriving lower bounds:

\begin{enumerate}
    \item[(i)] From the differential equation for $S$, we have
    \begin{align}
    \frac{dS}{dt} \ge -\beta S A_s. \label{eq:S}
    \end{align}
    Applying the integrating factor method to \eqref{eq:S} , we get
    \begin{align}
    \frac{d}{dt} \left( S(t) \exp\left(\int_0^t \beta A_s(\tau) \, d\tau \right) \right) \ge 0.\label{eq:S2}
    \end{align}
    Integrating both sides of \eqref{eq:S2} from $0$ to $t$ yields
    \begin{align}
        S(t) \ge S(0) \exp\left(-\int_0^t \beta A_s(\tau) \, d\tau \right) \ge 0.
    \end{align}

    \item[(ii)] For $V$, we have
    \begin{align}
         \frac{dV}{dt} \ge -(\gamma + \psi)V,
    \end{align}
    which gives
    \begin{align}
        V(t) \ge V(0) \exp(-(\gamma + \psi)t) \ge 0.
    \end{align}

    \item[(iii)] For $R$, the equation implies
    \begin{align}
    R(t) \ge R(0) \exp(-\sigma t) \ge 0.
    \end{align}

    \item[(iv)] Similarly, the last two equations yield
    \begin{align}
    A_s(t) \ge A_s(0) \exp(-(\mu + \lambda)t) \ge 0, \quad R_s(t) \ge R_s(0) \ge 0.
    \end{align}
\end{enumerate}
Each state variable remains non-negative on the interval $[0, T]$. Hence, by continuity of solutions and the non-negativity of their derivatives at the boundary of the state space, the solution can be extended beyond $T$, contradicting the maximality of $T$ unless $T = \infty$. Therefore, the solutions remain non-negative for all $t > 0$.
\end{proof}

As an additional check on the dynamics near the boundaries of the positive orthant, we observe:
\begin{equation}
\left. 
\begin{aligned}
    &\left.\frac{dS}{dt}\right|_{S = 0} = \sigma R \ge 0, \quad
\left.\frac{dV}{dt}\right|_{V = 0} = \beta S A_s \ge 0, \quad
\left.\frac{dR}{dt}\right|_{R = 0} = \gamma V \ge 0,\\
&\left.\frac{dA_s}{dt}\right|_{A_s = 0} = \psi V \ge 0, \quad
\left.\frac{dR_s}{dt}\right|_{R_s = 0} = \lambda A_s \ge 0.
\end{aligned}
\qquad \quad \right\}
\end{equation}
These boundary conditions confirm that trajectories cannot cross out of the positive orthant, and the region $\Gamma$ is positively invariant.

\subsection{Scam-free equilibrium (SFE)}
We now analyze the scam-free equilibrium (SFE) of model~\eqref{eq:model}, denoted by
\begin{align}
\label{eq:sfe}
\phi_0 = (S^*, V^*, R^*, A_s^*, R_s^*) = (N, 0, 0, 0, 0).
\end{align}

To understand the stability of $\phi_0$, we compute the basic reproduction number, denoted by $R_0$. In epidemiological models, $R_0$ represents the expected number of new cases directly generated by one infectious individual in a fully susceptible population. In this context, it quantifies the expected number of individuals who get scammed due to the presence of a single scammer, hence referred to as the scam reproduction number. Two compartments are directly associated with individuals influenced by a scam, the $V$-class and the $A_s$-class. Let $y_i$ represent the population in the $i$-th scammed compartment, where $i = 1$ corresponds to $V$ and $i = 2$ to $A_s$. Then the dynamics of each $y_i$ class can be written in the form
\begin{align}
    \frac{dy_i}{dt} = F_i(y) - G_i(y),
\end{align}

where $F_i(y)$ denotes the rate at which new individuals who are scammed appear in compartment $i$, and $G_i(y)$ accounts for all other inflow and outflow dynamics of compartment $i$.

The next generation matrix method leads us to define the following matrices
\begin{align}
    F = \begin{bmatrix}
    0 & \beta S^* \\
    0 & 0
\end{bmatrix}, \quad \text{and} \quad 
V = \begin{bmatrix}
    \gamma + \psi & 0 \\
    -\psi & \mu + \lambda - \delta
\end{bmatrix},
\end{align}
where the next generation matrix is then defined as
\begin{align}
    K = FV^{-1} = \frac{1}{(\gamma + \psi)(\mu + \lambda - \delta)} 
\begin{bmatrix}
    \beta S^* \psi & \beta S^*(\gamma + \psi) \\
    0 & 0
\end{bmatrix}.
\end{align}

The basic reproduction number is defined as the spectral radius of $K$, which in this case gives
\begin{align}
\label{eq:R0}
\mathcal{R}_0 = \frac{\beta \psi N}{(\gamma + \psi)(\mu + \lambda - \delta)},
\end{align}
where we used $S^* = N$ at the scam-free equilibrium.

\begin{theorem}[Local stability of the SFE]
The scam-free equilibrium $\phi_0$ is locally asymptotically stable if $\mathcal{R}_0 < 1$, and unstable if $\mathcal{R}_0 > 1$.
\end{theorem}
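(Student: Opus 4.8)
The plan is to linearize system~\eqref{eq:model} at $\phi_0$, read off the spectrum of the Jacobian $J(\phi_0)$, and finish with the Routh--Hurwitz criterion on the single nontrivial factor of its characteristic polynomial; this reproduces the conclusion of the next-generation theorem of van den Driessche and Watmough applied to the pair $(F,V)$ already constructed above, and I would keep the direct computation because it exhibits the role of $\mathcal{R}_0$ transparently. First I would evaluate $J(\phi_0)$: because $A_s^* = 0$ at the scam-free equilibrium the entire $S$-column of $J(\phi_0)$ vanishes, and the $R_s$-column vanishes identically, so (after reordering the variables) $J(\phi_0)$ is block-triangular and its characteristic polynomial factors as $x^{2}(x+\sigma)\,q(x)$. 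Here the two zeros are the eigenvalues of the trivial block, the simple root $-\sigma$ comes from the $R$-compartment, and $q(x)=x^{2}+a_1 x + a_0$ is exactly $\det\!\left(xI-(F-V)\right)$ for the infected block $(V,A_s)$, with $a_1=(\gamma+\psi)+(\mu+\lambda-\delta)$ and $a_0=(\gamma+\psi)(\mu+\lambda-\delta)-\beta\psi N$.

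Next I would apply Routh--Hurwitz to the quadratic $q$. Under the standing requirement $\mu+\lambda>\delta$ (which is precisely what makes $\mathcal{R}_0$ well defined and positive) the coefficient $a_1$ is automatically positive, while $a_0=(\gamma+\psi)(\mu+\lambda-\delta)(1-\mathcal{R}_0)$. Hence if $\mathcal{R}_0<1$ then $a_0>0$, both roots of $q$ have negative real part, and every eigenvalue of $J(\phi_0)$ other than the two structural zeros lies in the open left half-plane; conversely, if $\mathcal{R}_0>1$ then $a_0<0$, so $q$ has a positive real root and $\phi_0$ is unstable, which already settles the second half of the statement. As a consistency check, $q$ must coincide with the characteristic polynomial of $F-V$, which protects against sign errors in the Jacobian.

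The hard part --- essentially the only delicate point --- is the non-hyperbolicity created by the two zero eigenvalues, since a bare linearization does not by itself give asymptotic stability of an isolated point. I would resolve this by noting that model~\eqref{eq:model} admits a two-parameter family of scam-free equilibria $\{(S^*,0,0,0,R_s^*) : S^*,R_s^* \ge 0\}$ and that the two neutral eigendirections span exactly the tangent space to this family (the $S$- and $R_s$-directions); the correct assertion is therefore stability of $\phi_0$ inside the invariant region $\Gamma$ together with convergence of nearby trajectories to a point of that equilibrium set. Equivalently, passing to the flow transverse to the family leaves a reduced system whose Jacobian has spectrum $\{-\sigma\}\cup\{\text{roots of }q\}$, which is hyperbolic and lies in the left half-plane exactly when $\mathcal{R}_0<1$, so the Hartman--Grobman theorem (or, directly, the van den Driessche--Watmough theorem) yields the claimed local asymptotic stability. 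I would spell out this reduction so that the use of the word ``stable'' in the presence of the conserved and accumulating directions is unambiguous.
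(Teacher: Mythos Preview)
Your approach coincides with the paper's at the core: both linearize at $\phi_0$, factor the characteristic polynomial of $J(\phi_0)$ as $x^{2}(x+\sigma)\,q(x)$, and reduce everything to the quadratic $q$ coming from the $(V,A_s)$ block. The paper analyzes $q$ by computing the discriminant (to get real roots) and then the sign of the sum and product of roots; your Routh--Hurwitz check $a_1>0,\ a_0>0$ is the same condition for a quadratic, and both arguments identify $a_0>0 \Leftrightarrow \mathcal{R}_0<1$ (under $\mu+\lambda>\delta$) in the same way.

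The one substantive difference is your treatment of the two zero eigenvalues. The paper simply concludes ``all eigenvalues have negative real parts when $\mathcal{R}_0<1$,'' which is literally inaccurate since $\varepsilon_1=\varepsilon_2=0$, and it does not address the non-hyperbolicity. Your observation that the zero eigendirections are exactly the $S$- and $R_s$-directions tangent to the two-parameter family $\{(S^*,0,0,0,R_s^*)\}$ of scam-free equilibria, and that the transverse reduced flow has spectrum $\{-\sigma\}\cup\{\text{roots of }q\}$, is the right justification for calling $\phi_0$ locally asymptotically stable in the sense intended; this makes your argument more complete than the paper's, at the cost of a short center-manifold/reduction paragraph that the paper omits.
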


\begin{proof}
We analyze the eigenvalues of the Jacobian matrix of the system of ODEs \eqref{eq:model} evaluated at $\phi_0$
\begin{align}
    J = \begin{bmatrix}
        0 & 0 & \sigma & - \beta N & 0 \\
        0 & -(\gamma + \psi) & 0 & \beta N & 0 \\
        0 & \gamma & -\sigma & 0 & 0 \\
        0 & \psi & 0 & \delta - \mu - \lambda & 0 \\
        0 & 0 & 0 & \lambda & 0
\end{bmatrix}.
\end{align}

The eigenvalues $\varepsilon_i, i =1,2,\ldots,5$ of the Jacobian matrix $J$ can be evaluated by solving the characteristics equation $|J-\varepsilon I| = 0$. The eigenvalues are determined as follows: one eigenvalue is $\varepsilon_3 = -\sigma < 0$, two eigenvalues $\varepsilon_{1} = \varepsilon_2 = 0$, and the last two eigenvalues, $\varepsilon_4$ and $\varepsilon_5$, come from the submatrix:
\begin{align}
    \begin{bmatrix}
    -(\gamma + \psi) & \beta N \\
    \psi & \delta - \mu - \lambda
\end{bmatrix}.
\end{align}
This yields the characteristic equation:
\begin{align}
    \varepsilon^2 - (p_1 + p_2)\varepsilon + (p_1p_2 - \beta N \psi) = 0,
\end{align}
where $p_1 = -(\gamma + \psi)$ and $p_2 = \delta - \mu - \lambda$. The necessary conditions for the roots of this characteristic equation to be real and negative are (i) positive discriminant, (ii) $-(p_1+p_2)>0$, and (iii) $p_1 p_2 - \beta N\psi > 0$. Now, we examine the signs of the roots. The discriminant is given by
\begin{align}
    \Delta = (p_1 + p_2)^2 - 4(p_1p_2 - \beta N \psi) = (p_1 - p_2)^2 + 4\beta N \psi > 0, 
\end{align}
So, the roots are real. The sum of the roots is negative since $-(p_1 + p_2) > 0$. The product of the roots is positive if and only if $p_1p_2 - \beta N \psi > 0$. Now, noting that
\begin{align*}
    \mathcal{R}_0 = \frac{\beta N \psi}{-p_1 p_2},
\end{align*}
we conclude that $\mathcal{R}_0 < 1$ is equivalent to $p_1p_2 - \beta N \psi > 0$. Therefore, all eigenvalues have negative real parts when $\mathcal{R}_0 < 1$, proving local stability of the scam-free equilibrium.
\end{proof}

\begin{theorem}[Global stability of the SFE]
The scam-free equilibrium $\phi_0$ is globally asymptotically stable in $\Gamma$ when $\mathcal{R}_0 < 1$.
\end{theorem}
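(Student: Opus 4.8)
The plan is to build a Lyapunov function supported on the two scam compartments $V$ and $A_s$ and then invoke LaSalle's invariance principle. Two preliminary ingredients make the estimate close. By the positivity and boundedness theorems, every trajectory starting in $\mathbb{R}_+^5$ remains bounded in $\Gamma$, so its $\omega$-limit set is nonempty, compact and invariant. Moreover, summing the first three equations of~\eqref{eq:model} gives $\tfrac{d}{dt}(S+V+R) = -\psi V \le 0$, so $S+V+R$ is non-increasing and in particular $S(t) \le N$ for all $t \ge 0$, which is the inequality needed below. I would also record explicitly the standing assumption hidden in $\mathcal{R}_0 < 1$, namely $\mu+\lambda-\delta>0$: otherwise the denominator in~\eqref{eq:R0} is non-positive and $\mathcal{L}$ below is not a valid Lyapunov function.

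Next I would introduce
\[
\mathcal{L} = \psi\,V + (\gamma+\psi)\,A_s \ \ge\ 0,
\]
which vanishes precisely on $\{V=A_s=0\}$. Differentiating along~\eqref{eq:model} and using $\dot V = \beta S A_s - (\gamma+\psi)V$ and $\dot A_s = \psi V - (\mu+\lambda-\delta)A_s$, the cross terms $\mp\psi(\gamma+\psi)V$ cancel, leaving
\[
\dot{\mathcal{L}} = \psi\beta S\,A_s - (\gamma+\psi)(\mu+\lambda-\delta)A_s = (\gamma+\psi)(\mu+\lambda-\delta)\Bigl(\tfrac{S}{N}\,\mathcal{R}_0 - 1\Bigr)A_s .
\]
Since $\mu+\lambda-\delta>0$, $0\le S\le N$, and $\mathcal{R}_0<1$, the bracket is at most $\mathcal{R}_0-1<0$, so $\dot{\mathcal{L}}\le 0$ throughout $\Gamma$, with $\dot{\mathcal{L}}=0$ iff $A_s=0$. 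LaSalle's invariance principle then forces convergence to the largest invariant subset $M$ of $\{A_s=0\}$; on that set $\dot A_s=\psi V$ forces $V\equiv 0$, and with $V=A_s=0$ one has $\dot R=-\sigma R$, so $R(t)\to 0$, while $\dot S=\sigma R\to 0$ and $\dot R_s=0$. Using once more that $S+V+R$ is conserved on $M$ (where $V\equiv0$) together with $R\to 0$ identifies the limiting $S$ with $N$ and $R_s$ with its terminal value, so the $\omega$-limit set collapses to $\phi_0$. As an alternative to the LaSalle step, one can argue by comparison: $(\dot V,\dot A_s)$ is dominated by the linear cooperative system with matrix $\bigl[\begin{smallmatrix}-(\gamma+\psi)&\beta N\\ \psi&-(\mu+\lambda-\delta)\end{smallmatrix}\bigr]$, which has negative trace and determinant $(\gamma+\psi)(\mu+\lambda-\delta)-\beta N\psi$, hence is Hurwitz exactly when $\mathcal{R}_0<1$; this drives $(V,A_s)\to(0,0)$, after which the $S$, $R$, $R_s$ equations are handled directly.

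The step I expect to be the genuine obstacle is the last one, namely passing from $\dot{\mathcal{L}}\le 0$ to convergence to the \emph{single point} $\phi_0$ rather than merely to the invariant face $\{V=A_s=0\}$: the scam-free subsystem possesses a whole continuum of rest points (every state with $V=A_s=R=0$ and $S+R_s$ arbitrary is an equilibrium, consistent with the double zero eigenvalue found in the local-stability analysis). Resolving this cleanly is precisely where the monotone quantity $S+V+R$ and the monotonicity of $R_s$ must be used, and where one has to be careful about what "$N$" denotes along a trajectory. A scrupulous write-up may therefore phrase the conclusion as global attractivity of the scam-eradication state, i.e.\ $V,A_s,R\to 0$ with $S$ and $R_s$ converging to constants fixed by the initial data, which is what the Lyapunov/LaSalle argument actually delivers.
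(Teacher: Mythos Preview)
Your approach is essentially identical to the paper's: both construct the linear Lyapunov function $L=a_1V+a_2A_s$ with the coefficients chosen so that the $V$-terms cancel, and your $\mathcal{L}=\psi V+(\gamma+\psi)A_s$ is exactly the paper's $L$ with $a_2=\gamma+\psi$, $a_1=\tfrac{a_2\psi}{\gamma+\psi}=\psi$. The paper, however, stops at $\dot L\le 0$, asserts ``equality only at the SFE'' (in fact equality holds on all of $\{A_s=0\}$), and invokes Lyapunov's direct method without carrying out the LaSalle step; your write-up is the more careful one, and the continuum-of-equilibria issue you raise in the final paragraph is a genuine subtlety that the paper does not address.
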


\begin{proof}
To prove the global stability, we construct a Lyapunov function of the form
\begin{align}
    L = a_1 V + a_2 A_s,
\end{align}
with $a_1, a_2 > 0$ to be determined. Then,

\begin{align}
\frac{dL}{dt} &= a_1 \frac{dV}{dt} + a_2 \frac{dA_s}{dt} = a_1[\beta S A_s - (\gamma + \psi)V] + a_2[(\delta - \mu - \lambda)A_s + \psi V].
\end{align}

We eliminate $V$ terms by choosing $a_1 = \frac{a_2 \psi}{\gamma + \psi}$. Substituting, we get:
\begin{align}
    \frac{dL}{dt} = a_2 A_s (\gamma + \mu - \delta) \left[ \frac{\beta \psi S}{(\gamma + \psi)(\lambda + \mu - \delta)} - 1 \right].
\end{align}

Since $S \le N$, we have
\begin{align}
    \frac{dL}{dt} \le a_2 A_s (\gamma + \mu - \delta)(\mathcal{R} - 1).
\end{align}

Hence, if $\mathcal{R}_0 < 1$, then $\frac{dL}{dt} \le 0$, with equality only at the SFE. Therefore, by Lyapunov's direct method, $\phi_0$ is globally asymptotically stable in $\Gamma$, which means every trajectory reaches $\phi_0$ whenever $\mathcal{R}_0 < 1$.
\end{proof}

\subsection{Scam-endemic equilibrium (SEE)}
Let $\phi_E = (S^*, V^*, R^*, A_s^*, R_s^*)$ denote the scam-endemic equilibrium (SEE), where all derivatives in model~\eqref{eq:model} vanish. Solving the steady-state equations yields the following 
\begin{equation}
    \left.
    \begin{aligned}
        S^* &= \frac{\sigma R^*}{\beta A_s^*}, \\
        V^* &= \frac{\beta S^* A_s^*}{\gamma + \psi}, \\
        R^* &= \frac{\gamma V^*}{\sigma}, \\
        A_s^* &\ne 0 \Rightarrow 1 + \frac{\beta \psi S^*}{(\delta - \mu - \lambda)(\gamma + \psi)} = 0 \Rightarrow 1 - \mathcal{R}_0 = 0 \Rightarrow \mathcal{R}_0 = 1.
    \end{aligned}
    \qquad \quad \right\}
\end{equation}

Thus, a transcritical bifurcation occurs at $\mathcal{R}_0 = 1$. The SEE exists and is stable whenever $R_0 > 1$.

\begin{theorem}
The scam-endemic equilibrium $\phi_E$ is globally asymptotically stable in $\Gamma$ for $R_0 > 1$.
\end{theorem}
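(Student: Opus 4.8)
The plan is to establish global attractivity of $\phi_E$ by a Goh--Volterra (logarithmic) Lyapunov function, the standard instrument for endemic equilibria of compartmental models, combined with LaSalle's invariance principle. Two preliminary points must be settled first. (a) The $R_s$ equation decouples, so the dynamics of interest reduce to the four-dimensional $(S,V,R,A_s)$ subsystem, and a logarithmic Lyapunov function centred at $\phi_E$ is only defined where all four coordinates are positive; I would therefore prove convergence to $\phi_E$ for every trajectory with $S(0),V(0),R(0),A_s(0)>0$ rather than literally on all of $\Gamma$, since $\Gamma$ also contains scam-free equilibria that do not converge to $\phi_E$. (b) Existence of a positive $\phi_E$ is itself delicate: summing the first four equations gives $\frac{d}{dt}(S+V+R+A_s)=(\delta-\mu-\lambda)A_s$, which at an interior steady state forces $\delta=\mu+\lambda$, exactly the value at which $\mathcal{R}_0$ ceases to be well defined, so in the model precisely as written the hypotheses ``$\phi_E$ exists'' and ``$\mathcal{R}_0>1$'' are not simultaneously met. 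I would resolve this by following the paper's own convention and treating $N$ as held at its equilibrium value (equivalently, appending a demographic recruitment/removal term); granting this, the steady-state relations of Section~\ref{MA_sect} pin down $S^*=N/\mathcal{R}_0$ and then $V^*,R^*,A_s^*$ uniquely and positively when $\mathcal{R}_0>1$.

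Granting existence of such a $\phi_E=(S^*,V^*,R^*,A_s^*)$, the candidate function is
\begin{align}
L = c_1\!\left(S-S^*-S^*\ln\frac{S}{S^*}\right) + c_2\!\left(V-V^*-V^*\ln\frac{V}{V^*}\right) + c_3\!\left(R-R^*-R^*\ln\frac{R}{R^*}\right) + c_4\!\left(A_s-A_s^*-A_s^*\ln\frac{A_s}{A_s^*}\right),
\end{align}
with constants $c_i>0$ to be determined; $L$ is positive definite on the positive orthant with a strict global minimum at $\phi_E$. Differentiating along \eqref{eq:model}, substituting the right-hand sides, and using the equilibrium identities $\sigma R^*=\beta S^*A_s^*$, $\beta S^*A_s^*=(\gamma+\psi)V^*$, $\gamma V^*=\sigma R^*$ and $\psi V^*=(\mu+\lambda-\delta)A_s^*$ to eliminate the bare rate constants in favour of equilibrium fluxes, $\dot L$ becomes a combination of the dimensionless ratios $S/S^*$, $V/V^*$, $R/R^*$, $A_s/A_s^*$ and their reciprocals, each carrying a weight that depends only on the $c_i$ and the equilibrium fluxes.

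The heart of the argument is the choice of $c_1,c_2,c_3,c_4$: one wants every term linear in a single ratio to cancel, leaving $\dot L$ as a sum of expressions of the canonical types $2-\frac{a}{b}-\frac{b}{a}$ and $n-\frac{a_1}{a_2}-\frac{a_2}{a_3}-\cdots-\frac{a_n}{a_1}$ with the ratios forming a cycle, each of which is $\le 0$ by the AM--GM inequality, with equality only when all ratios equal $1$, i.e.\ only at $\phi_E$. The bookkeeping is the main obstacle: the model contains the loop $S\to V\to R\to S$ together with the branch $V\to A_s$, so a single set of weights must simultaneously balance the $S$--$V$, $V$--$R$, $R$--$S$ and $V$--$A_s$ exchanges, and I expect this matching — not any individual inequality — to require the real work. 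If a residual term resists cancellation, the fallback is to control it using $S\le N$ (from the boundedness theorem) or to defer it to the LaSalle step.

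Finally, with $\dot L\le 0$ on the interior of $\Gamma$ and the largest invariant subset of $\{\dot L=0\}$ reducing to the singleton $\{\phi_E\}$ (this last step uses the remaining equations to show $S=S^*$, $A_s=A_s^*$ forces $V=V^*$, $R=R^*$), LaSalle's invariance principle yields that every interior trajectory converges to $\phi_E$; combined with local asymptotic stability of $\phi_E$, read off from the linearization or from $L$ being a strict local Lyapunov function, this gives global asymptotic stability in $\Gamma$ whenever $\mathcal{R}_0>1$.
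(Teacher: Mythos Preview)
Your approach is the same as the paper's: both use a Volterra/Goh logarithmic Lyapunov function centred at $\phi_E$. The paper takes equal weights and asserts that each compartment's contribution to $\dot M$ is separately a perfect square $-\kappa_i(1-U_i)^2/U_i\le 0$, whereas your plan to balance coefficients so that $\dot L$ collapses into AM--GM cycles, then finish with LaSalle, is the more standard and more defensible execution of the same idea.

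Your caveats (a) and (b) are not only legitimate but sharper than the paper's own handling. Your observation that $\tfrac{d}{dt}(S+V+R+A_s)=(\delta-\mu-\lambda)A_s$ forces $\delta=\mu+\lambda$ at any interior equilibrium is correct, and in fact the obstruction appears even earlier: combining the steady-state relations $\sigma R^*=\beta S^*A_s^*$, $\beta S^*A_s^*=(\gamma+\psi)V^*$ and $\sigma R^*=\gamma V^*$ already gives $\psi V^*=0$, so for $\psi>0$ the model as written admits no strictly positive endemic equilibrium at all. The paper itself derives $\mathcal{R}_0=1$ as the condition for $\phi_E$ to exist and then, without resolving the inconsistency, asserts stability for $\mathcal{R}_0>1$; your proposal to reinterpret the result under a fixed-$N$ or demographic-closure convention is the natural way to make the theorem well posed before attempting a proof.
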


\begin{proof}
Consider the Volterra-type Lyapunov function:
\begin{align}
    M = \sum_{i=1}^5 \left( \frac{C_i}{C_i^*} - \ln\left(\frac{C_i}{C_i^*}\right) - 1 \right),
\end{align}

where $C_i$ denotes each compartment variable and $C_i^*$ its endemic value. Then $M$ is positive definite and vanishes at equilibrium.

Let $U_i = \frac{C_i^*}{C_i}$. For each $C_i$, we obtain
\begin{equation}
    \left.
    \begin{aligned}
        \frac{dM(S)}{dt} &= -\frac{\beta A_s}{U_S}(1 - U_S)^2 < 0, \\
        \frac{dM(V)}{dt} &= -\frac{\gamma + \psi}{U_V}(1 - U_V)^2 < 0, \\
        \frac{dM(R)}{dt} &= -\frac{\sigma}{U_R}(1 - U_R)^2 < 0, \\
        \frac{dM(A_s)}{dt} &= -\frac{\lambda + \mu - \delta}{U_{A_s}}(1 - U_{A_s})^2 < 0, \\
        \frac{dM(R_s)}{dt} &= 0.
    \end{aligned}
    \qquad \quad \right\}
\end{equation}

Thus,
\begin{align}
    \frac{dM}{dt} = \sum_{i=1}^5 \frac{dM(C_i)}{dt} \le 0,
\end{align}
with equality only at equilibrium. Therefore, $\phi_E$ is globally asymptotically stable when $\mathcal{R}_0 > 1$, which means that every trajectory reaches $\phi_E$ whenever $\mathcal{R}_0 > 1$.
\end{proof}

\section{Numerical simulation and results} \label{NSFD_sect}
\subsection{Parameter estimation}
\noindent We fitted our model to the monthly reported fraud cases in each Canadian province between January 2021 and March 2025, obtained from the Canadian Anti-Fraud Centre~\cite{canadafraud2025}. Parameter estimation was performed using a Bayesian Markov Chain Monte Carlo (MCMC) approach implemented with the MCMCSTAT MATLAB package~\cite{laine_mcmcstat}. This toolbox provides adaptive Metropolis-Hastings sampling with covariance adaptation for efficient exploration of the posterior distribution, without relying on external MATLAB toolboxes~\cite{haario2006dram}. 

The estimated parameters are: $\beta, \gamma, \sigma, \delta, \mu, \lambda, \psi$. The system of non-linear ODEs was numerically solved with MATLAB’s ode15s solver at each MCMC iteration to generate model predictions. Uniform prior distributions were assigned within biologically plausible bounds. The likelihood was formulated as a sum-of-squares error function, equivalent to a Gaussian error model, between observed and predicted victim counts.

We ran 10,000 iterations of the Delayed Rejection Adaptive Metropolis (DRAM) algorithm implemented in MCMCSTAT \cite{haario2006dram}, using an adaptive multivariate Gaussian proposal distribution to improve mixing and convergence \cite{haario2001adaptive}. The posterior distributions of the parameters were analyzed to obtain mean estimates and credible intervals. Posterior predictive simulations were generated by sampling parameter sets from the MCMC chains and simulating the model, confirming the model’s ability to capture the observed dynamics.  The results are shown in Table~\ref{tab:posterior} and Figure~\ref{fig:posteriorfit}, respectively.

\begin{table}[h!]
\centering
\caption{Posterior mean estimates and approximate 95\% credible intervals of model parameters}
\label{tab:posterior}
\begin{tabular}{lccc}
\hline
\hline
\textbf{Parameter} & \textbf{Posterior Mean} & \textbf{2.5\% Quantile} & \textbf{97.5\% Quantile} \\
\hline
\hline
$\beta$   & 0.008425 & 0.005   & 0.012 \\
$\sigma$  & 0.023868 & 0.015   & 0.035 \\
$\gamma$  & 0.059366 & 0.035   & 0.090 \\
$\psi$    & 0.000004 & 0.000001 & 0.00001 \\
$\delta$  & 0.026109 & 0.015   & 0.040 \\
$\mu$     & 0.016590 & 0.010   & 0.025 \\
$\lambda$ & 0.016003 & 0.009   & 0.025 \\
\hline
\hline
\end{tabular}
\end{table}

\begin{figure}[h!]
    \centering
    \includegraphics[width=0.5\linewidth]{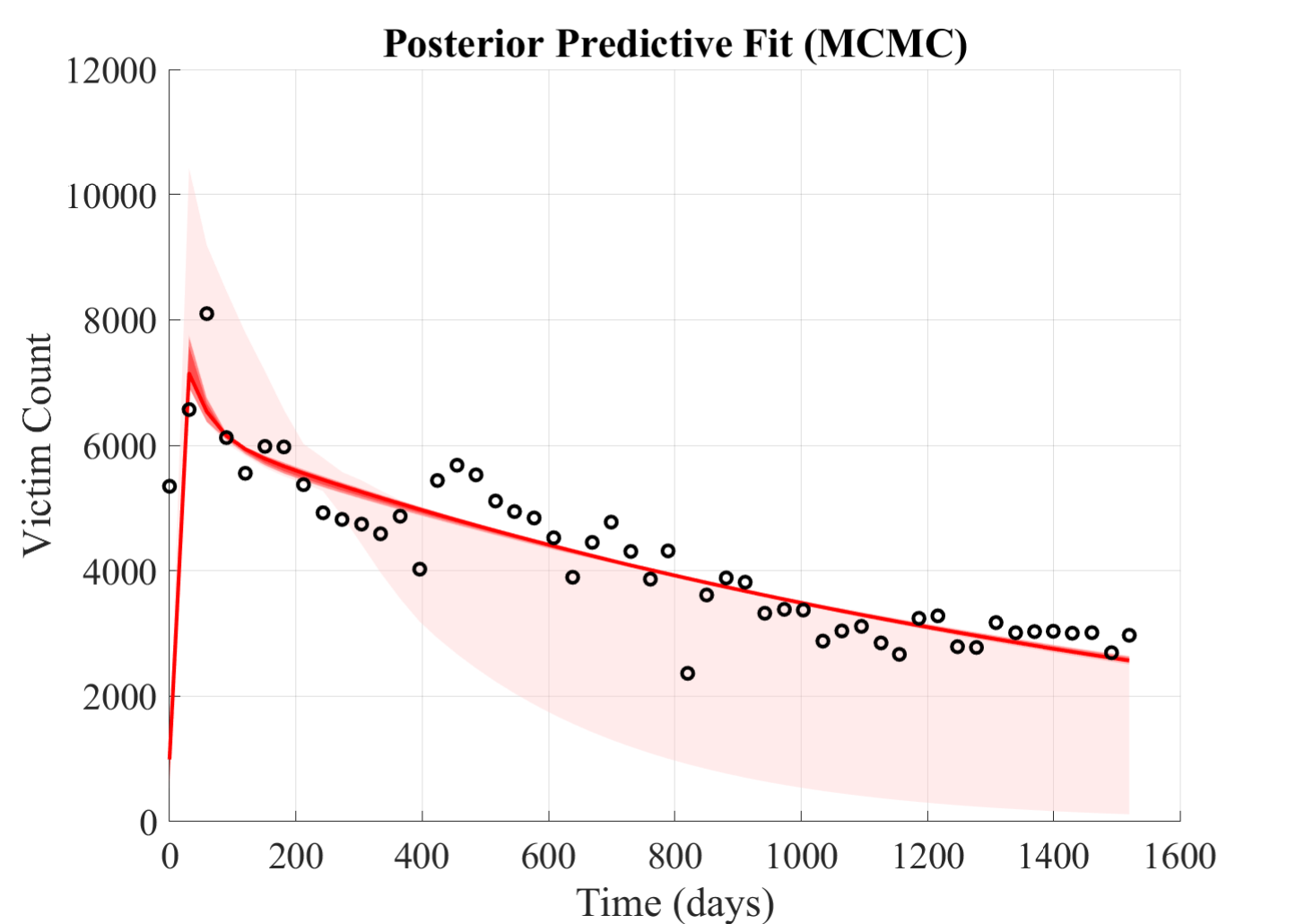}
    \caption{Posterior distributions of the fitted model parameters. The open circles represent the observed data, while the solid lines and shaded areas show the posterior mean and 95\% credible intervals, respectively}
    \label{fig:posteriorfit}
\end{figure}

\subsection{Nonstandard finite difference scheme}
To ensure that the essential qualitative features of the continuous model are retained in its numerical approximation, we employ a structure-preserving discretization technique. Given that the positivity of the continuous dynamical system \eqref{eq:model} has been established in Theorem \ref{thm:positivity}, we adopt a discrete scheme designed to preserve the non-negativity and boundedness of the dynamical system. This is achieved by applying the non-standard finite difference (NSFD) method. Before detailing the specific discretization of \eqref{eq:model}, we briefly outline the core principles of the NSFD approach.

\begin{definition}\label{DefNSFD}
A one-step finite difference scheme for approximating a differential equation is classified as an NSFD scheme if it satisfies at least one of the following conditions in its construction, as described in the seminal work of Mickens \cite{Mickens}:
\begin{enumerate}
\item \label{Rule1} Non-local approximation of terms: Linear or non-linear terms appearing in the differential equation are approximated using combinations of the dependent variable evaluated at more than one grid point (either spatial or temporal), or points distinct from the standard evaluation point for the finite difference approximation. For example, $ u \approx \dfrac{u_{k+1} + u_{k-1}}{2} $, $u^{3} \approx 3u_{k+1} (u_{k})^2 - 2(u_{k})^{3}$.

\item \label{Rule2} Use of non-traditional denominator functions: The standard step size (e.g., $h$ for time) in the denominator of the discrete form of the derivatives is replaced by a function of the step size, denoted here as $\theta(h)$, where $h$ represents the step size. This function $\theta(h)$ must satisfy the property
\begin{equation}
	\lim_{h\to 0} \frac{\theta(h)}{h} = 1 \quad \text{or equivalently} \quad \theta(h) = h + \mathcal{O}(h^{2}) \quad \text{as } h \to 0.
\end{equation}
This ensures that the discrete derivative converges to the continuous derivative as the step size approaches zero. The specific form of $\phi(h)$ is strategically chosen, often dependent on parameters or terms from the differential equation, to preserve the specific dynamical properties of the continuous system. For example, the time derivative $\frac{du}{dt}$ might be approximated by $\displaystyle \frac{u_{k+1} - u_k}{\theta(h)}$ instead of $\frac{u_{k+1} - u_k}{h}$.
\end{enumerate}
\end{definition}

The primary goal of constructing an NSFD scheme is to develop a discrete model whose qualitative dynamics mirrors those of the continuous differential equation, often valid for arbitrary step sizes \cite{Tijani1}.

\subsubsection{The NSFD scheme}
We consider the discrete-time approximation of the continuous model \eqref{eq:model}. Let $\mathbf{\Omega}_k = [S_k, V_k, R_k, A_{s_k}, R_{s_k}]^T$ denote the vector of numerical approximations to the solution $\mathbf{\Omega}(t_k) = [S(t_k), V(t_k), R(t_k), A_s(t_k), R_s(t_k)]^T$ at the discrete time points $t_k = k h$, for $k = 0, 1, 2, \dots$. Here, $h > 0$ represents the constant time step size. The dynamically-consistent discrete scheme for \eqref{eq:model} is given as: 

\begin{equation}\label{eq:nsfd}
\left.
\begin{aligned}
\dfrac{S_{k+1} - S_{k}}{\theta(h)} &= -\beta S_{k+1}A_{s_{k}} + \sigma R_{k}, \\
\dfrac{V_{k+1} - V_{k}}{\theta(h)} &= \beta S_{k}A_{s_{k}} - \gamma V_{k+1} - \psi V_{k+1}, \\
\dfrac{R_{k+1} - R_{k}}{\theta(h)} &= \gamma V_{k} - \sigma R_{k+1}, \\
\dfrac{A_{s_{k+1}} - A_{s_{k}}}{\theta(h)} &= \delta A_{s_{k}} - \mu A_{s_{k+1}} - \lambda A_{s_{k+1}} + \psi V_{k}, \\
\dfrac{R_{s_{k+1}} - R_{s_{k}}}{\theta(h)} &= \lambda A_{s_{k}}, 
\end{aligned}
\qquad \quad \right\}
\end{equation}
where $\theta(h)$ is the non-standard denominator function defined as
\begin{align*}
    \theta(h) = \dfrac{1 - e^{-(\sigma + \mu + \gamma + \lambda + \psi)h}}{\sigma + \mu + \gamma + \lambda + \psi}.
\end{align*}

This specific form for the denominator function, which depends on the combined rate $(\sigma + \mu + \gamma + \lambda + \psi)$ present in some parts of the continuous system, satisfies the consistency property $\theta(h) = h + \mathcal{O}(h^2)$ as $h \to 0$. The use of this non-standard denominator, along with the evaluation of certain terms at the forward time step $k+1$ (e.g., $S_{k+1}$, $V_{k+1}$, $R_{k+1}$, $A_{s_{k+1}}$), are applications of the rules outlined in Definition~\ref{DefNSFD} and are key features of this NSFD scheme designed to ensure its dynamical consistency with \eqref{eq:model} for any step size $h > 0$.

\subsubsection{The NSFD dynamical property }
Here, we established some of the properties of the continuous model \eqref{eq:model} for the discrete scheme \eqref{eq:nsfd}.
\begin{theorem}\label{pos+nsfd}
The NSFD scheme in \eqref{eq:nsfd} is non-negative for any $k \in \mathbb{N}$, whenever $\mathbf{\Omega}(t_0) \geq 0$, each of the parameters $\beta, \delta, \sigma, \mu, \gamma, \lambda, \psi \geq 0$  and 
\begin{equation}
S_{k+1} \geq 0, \quad   V_{k+1} \geq 0, \quad R_{k+1} \geq 0, \quad A_{s_{k+1}} \geq 0, \quad R_{s_{k+1}} \geq 0.
\end{equation}
\end{theorem}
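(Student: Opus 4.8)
\emph{Proof proposal.} The plan is to rewrite the implicit scheme \eqref{eq:nsfd} as an equivalent \emph{explicit} one-step map $\mathbf{\Omega}_{k}\mapsto\mathbf{\Omega}_{k+1}$ and then argue by induction on $k$. The first ingredient is the elementary observation that the denominator function is strictly positive: writing $c:=\sigma+\mu+\gamma+\lambda+\psi\ge 0$, one has $\theta(h)=(1-e^{-ch})/c>0$ for every $h>0$ when $c>0$ (and $\theta(h)=h>0$ when $c=0$). This is the only property of $\theta$ that the non-negativity argument needs; the precise exponential form plays no further role here.

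Next I would solve each line of \eqref{eq:nsfd} for the time-$(k+1)$ unknown. Since every coupling term on the right-hand sides is evaluated at time $k$, each unknown $S_{k+1},V_{k+1},R_{k+1},A_{s_{k+1}},R_{s_{k+1}}$ occurs only in its own equation, so no linear system has to be inverted; collecting terms gives
\begin{align*}
S_{k+1} &= \frac{S_{k} + \theta(h)\sigma R_{k}}{1 + \theta(h)\beta A_{s_{k}}}, \qquad
V_{k+1} = \frac{V_{k} + \theta(h)\beta S_{k} A_{s_{k}}}{1 + \theta(h)(\gamma + \psi)}, \qquad
R_{k+1} = \frac{R_{k} + \theta(h)\gamma V_{k}}{1 + \theta(h)\sigma}, \\
A_{s_{k+1}} &= \frac{(1 + \theta(h)\delta)\,A_{s_{k}} + \theta(h)\psi V_{k}}{1 + \theta(h)(\mu + \lambda)}, \qquad
R_{s_{k+1}} = R_{s_{k}} + \theta(h)\lambda A_{s_{k}}.
\end{align*}
With the inductive hypothesis $\mathbf{\Omega}_{k}\ge 0$, the parameter assumptions $\beta,\gamma,\sigma,\delta,\mu,\lambda,\psi\ge 0$, and $\theta(h)>0$, every denominator above has the form $1+\theta(h)\cdot(\text{nonnegative})\ge 1>0$, and every numerator is a finite sum of products of non-negative quantities, hence non-negative. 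Therefore $\mathbf{\Omega}_{k+1}\ge 0$ componentwise. The base case is the assumed $\mathbf{\Omega}_{0}=\mathbf{\Omega}(t_{0})\ge 0$, so the conclusion holds for all $k\in\mathbb{N}$ by induction; in particular the inequalities $S_{k+1}\ge 0,\dots,R_{s_{k+1}}\ge 0$ displayed in the statement are exactly the output of the inductive step.

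I do not expect a genuine obstacle in this argument: once the explicit representation is written down, the rest is bookkeeping. The one point I would state carefully is the \emph{well-posedness} of the implicit scheme, namely that each update equation can indeed be solved uniquely for the forward value; this reduces precisely to the non-vanishing of the denominators $1+\theta(h)\beta A_{s_{k}}$, $1+\theta(h)(\gamma+\psi)$, $1+\theta(h)\sigma$ and $1+\theta(h)(\mu+\lambda)$, which is immediate from $\theta(h)>0$ together with $A_{s_{k}}\ge 0$ and non-negativity of the parameters. I would also remark that the same explicit form is what one subsequently exploits to carry the continuous boundedness estimate of Theorem~1 over to the discrete total population $N_{k}$.
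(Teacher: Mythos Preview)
Your proposal is correct and follows essentially the same route as the paper: rewrite the scheme in explicit form as in \eqref{eq:nsfd1} and conclude by induction that non-negative initial data propagate. Your version is in fact more detailed than the paper's (you justify $\theta(h)>0$ and the well-posedness of the implicit updates), and your formula $R_{s_{k+1}} = R_{s_{k}} + \theta(h)\lambda A_{s_{k}}$ matches the scheme \eqref{eq:nsfd} exactly, whereas the paper's compact form \eqref{eq:nsfd1} writes $A_{s_{k+1}}$ there---a harmless discrepancy for the non-negativity argument.
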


\begin{proof}
We can establish that, in a compact form, the NSFD scheme \eqref{eq:nsfd} becomes

\begin{equation}\label{eq:nsfd1}
\left.
\begin{aligned}
S_{k+1} &= \dfrac{S_{k} + \theta(h)\sigma R_{k}}{1 + \theta(h) \beta A_{s_{k}}}, \\
V_{k+1} &= \dfrac{V_{k} + \theta(h) \beta S_{k} A_{s_{k}}}{1 + \theta(h)\gamma + \theta(h)\psi}, \\
R_{k+1} &=  \frac{R_k + \theta(h)\gamma V_k}{1 + \theta(h)\sigma},\\
A_{s_{k+1}} &=  \frac{A_{s_k}(1 + \theta(h)\delta) + \theta(h)\psi V_k}{1 + \theta(h)(\mu + \lambda)}, \\
R_{s_{k+1}} &= R_{s_{k}} + \theta(h)\lambda A_{s_{k+1}}.
\end{aligned}
\qquad \qquad \right\}
\end{equation}
Thus, starting with non-negative initial conditions $\mathbf{\Omega}_0 \ge \mathbf{0}$, by induction, the scheme \eqref{eq:nsfd} guarantees that $\mathbf{\Omega}_k \ge \mathbf{0}$ for all $k \ge 0$.
\end{proof}

\begin{prop}\label{gas+nsfd}
The following statements establish the existence and stability of the scam-free equilibrium for the discrete model.
\begin{enumerate}
\item[(i)] The discrete scheme satisfies the scam-free equilibrium (SFE) by setting
\begin{align*}
    S_{k+1}=S_k=N, \; V_{k+1}=V_k=0, \; R_{k+1}=R_k=0, \; A_{s_{k+1}}=A_{s_k}=0, \; R_{s_{k+1}}=R_{s_k}=0.  
\end{align*}

\item[(ii)] The NSFD scheme in \eqref{eq:nsfd} is locally asymptotically stable with respect to the scam-free equilibrium if and only if $R_{0} < 1$ and $\mu + \lambda > \delta$.
\end{enumerate}
\end{prop}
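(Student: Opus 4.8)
The statement has two parts, which I treat separately. Part~(i) is a one-line check: substituting $S_k=N$ and $V_k=R_k=A_{s_k}=R_{s_k}=0$ into the explicit update \eqref{eq:nsfd1} makes every numerator collapse, yielding $S_{k+1}=N$ and $V_{k+1}=R_{k+1}=A_{s_{k+1}}=R_{s_{k+1}}=0$, so $\phi_0$ is a fixed point of the scheme for every step size $h>0$. For part~(ii) I would view \eqref{eq:nsfd1} as a map $\mathbf{\Omega}_{k+1}=\mathcal{F}(\mathbf{\Omega}_k)$ and study local stability through the eigenvalues of the Jacobian $J:=D\mathcal{F}(\phi_0)$, using the discrete-time criterion that a fixed point is asymptotically stable when every eigenvalue lies in the open unit disc. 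Writing $\theta=\theta(h)$ and differentiating the explicit formulas at $\phi_0$, the Jacobian $J$ becomes block-triangular in the variable order $(V,A_s,R,S,R_s)$: the leading $(V,A_s)$-block is
\[
B=\begin{bmatrix}\dfrac{1}{1+\theta(\gamma+\psi)} & \dfrac{\theta\beta N}{1+\theta(\gamma+\psi)}\\[1.5ex]\dfrac{\theta\psi}{1+\theta(\mu+\lambda)} & \dfrac{1+\theta\delta}{1+\theta(\mu+\lambda)}\end{bmatrix},
\]
the $R$-equation contributes the eigenvalue $\varepsilon_R=1/(1+\theta\sigma)$, and the $S$- and $R_s$-equations each contribute the eigenvalue $1$. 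Since $\varepsilon_R\in(0,1)$ unconditionally and the eigenvalue $1$ is semisimple (its algebraic and geometric multiplicities both equal two, with eigenvectors along the $S$- and $R_s$-axes, reflecting the two-parameter family of scam-free states $(S^\ast,0,0,0,R_s^\ast)$ and mirroring the double zero eigenvalue of the continuous Jacobian), the only eigenvalues that can break stability are the two roots of $p(\varepsilon)=\varepsilon^2-(\operatorname{tr}B)\varepsilon+\det B$.

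Next I would apply the Jury (Schur--Cohn) test: both roots of $p$ lie in the open unit disc if and only if $p(1)>0$, $p(-1)>0$, and $|\det B|<1$. Introducing $a=1+\theta(\gamma+\psi)$, $b=1+\theta(\mu+\lambda)$, $c=1+\theta\delta$, a short computation gives $ab\,p(1)=(a-1)(b-c)-\theta^2\beta\psi N=\theta^2\big[(\gamma+\psi)(\mu+\lambda-\delta)-\beta\psi N\big]$, so $p(1)>0$ is equivalent to $(\gamma+\psi)(\mu+\lambda-\delta)>\beta\psi N$; this holds exactly when $\mu+\lambda>\delta$ together with $\mathcal{R}_0<1$ (cf.\ \eqref{eq:R0}), and it fails whenever $\mu+\lambda\le\delta$. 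It then remains to check that $p(1)>0$ already forces the other two Jury conditions: from $\theta^2\beta\psi N<(a-1)(b-c)$ and the fact that $\mu+\lambda>\delta$ gives $c<b\le ab$, one obtains $ab\,p(-1)=ab+b+ac+c-\theta^2\beta\psi N>2(b+ac)>0$ and $-ab<c-\theta^2\beta\psi N\le c<ab$, i.e.\ $|\det B|<1$, by elementary estimates. Conversely, if $p(1)\le 0$ then $p$ has a real root $\ge 1$, so stability fails. Putting this together, all essential eigenvalues lie in the open unit disc precisely when $\mathcal{R}_0<1$ and $\mu+\lambda>\delta$, which is the asserted equivalence.

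The entrywise differentiation of \eqref{eq:nsfd1} and the algebra behind the identity for $ab\,p(1)$ are routine. The step I expect to be the main obstacle is verifying that the single inequality $p(1)>0$ subsumes the remaining two Jury conditions, so that the scalar condition $(\gamma+\psi)(\mu+\lambda-\delta)>\beta\psi N$ genuinely characterises the discrete stability region; a secondary point is handling the two unit eigenvalues honestly — they sit on the unit circle but are semisimple, so ``local asymptotic stability'' here is understood transverse to the two-parameter family of scam-free equilibria, exactly as in the continuous-time analysis.
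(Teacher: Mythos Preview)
The paper states this proposition without proof: after Proposition~\ref{gas+nsfd} the text moves directly to the numerical simulations, so there is no argument in the paper to compare against. Your approach --- linearising the explicit map \eqref{eq:nsfd1} at $\phi_0$, extracting the $(V,A_s)$ block $B$, and applying the Jury conditions --- is the natural one and is carried out correctly. The identification $ab\,p(1)=\theta^2\big[(\gamma+\psi)(\mu+\lambda-\delta)-\beta\psi N\big]$ is exactly right and gives the equivalence with $\mathcal{R}_0<1$ and $\mu+\lambda>\delta$; your verification that $p(1)>0$ forces $p(-1)>0$ and $|\det B|<1$ is also sound.

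One comment: your treatment of the two unit eigenvalues is in fact more careful than the paper's own continuous-time analogue (Theorem~3.3), which asserts that ``all eigenvalues have negative real parts'' despite the double zero eigenvalue. Your observation that the eigenvalue $1$ is semisimple, with eigenvectors along the $S$- and $R_s$-axes corresponding to the two-parameter family of scam-free states, and that stability should therefore be read transverse to this family, is the honest way to reconcile the statement with the spectrum. This caveat applies equally to the continuous result and could reasonably be flagged there as well.
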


\subsubsection{Simulation and discussion of the NSFD results}\label{Discus_sect}
This section explains the parametric analysis of the model parameters in the continuous differential equations \eqref{eq:model}. The absence of variation in the active scammer population for different values of $\gamma$ is expected, as seen in Figure~\ref{fig_gamma}. This is because $\gamma$, which represents the rate at which victims become resistant to the scammer's tactics, does not physically affect the active scammer population. This is further supported by the fact that $\gamma$ does not appear in the active scammer differential equation. Its indirect influence is also negligible given that $\psi \approx 0$. In stark contrast to the scammers population, the victim population is directly and significantly affected by the recovery rate $\gamma$, and the graph illustrates this phenomenon. Therefore, $\gamma$ can be understood as the efficiency of the recovery process.
Figure~\ref{fig_gamma} clearly shows that as $\gamma$ increases, the victim population decreases over time. A higher $\gamma$ creates a more powerful outflow from the victim pool.
\begin{figure}[htpb!]
	\centering
    \includegraphics[width=0.45\textwidth]{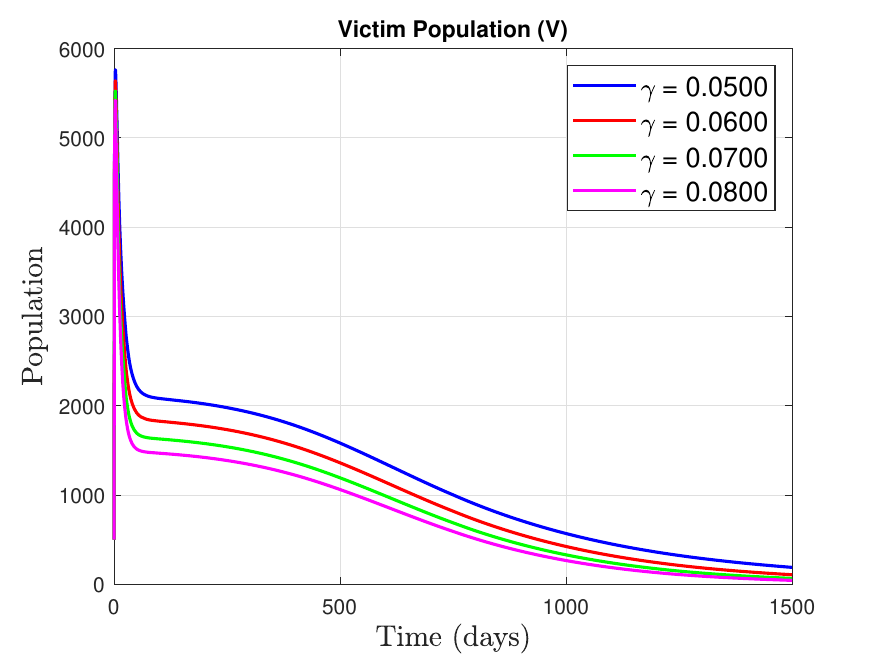}
    \includegraphics[width=0.45\textwidth]{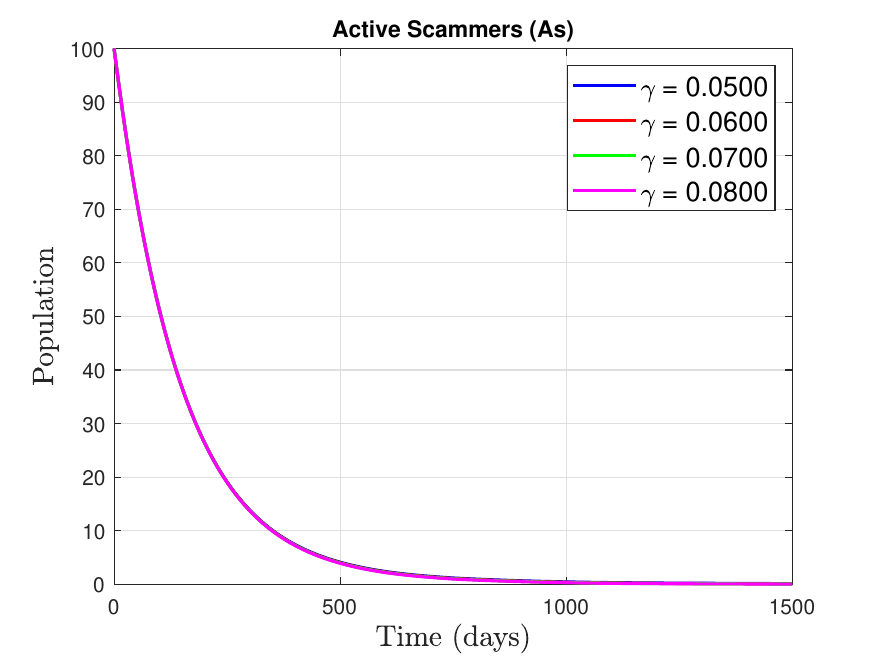}
	\caption{Parametric significance of the rate at which victims become resistant to scams for the populations of victims and active scammers}
    \label{fig_gamma}	
\end{figure}
Figure~\ref{fig_lambda} reveals the consequential impact of law enforcement or natural removal of scammers on the victim population. While the parameter $\lambda$ does not appear directly in the deterministic equation for the victims population, its influence is transmitted through the coupled nature of the system. The graph shows that a higher enforcement rate $\lambda$ leads to a more rapid decline in the victim population. This suggests that as active scammers are removed or arrested, someone in the victim population has a sigh of relief or is free from scams, perhaps at that point in time. Figure~\ref{fig_lambda} also illustrates the direct and potent effect of the law enforcement rate, $\lambda$, on the active scammer population. The graph shows a clear, monotonic relationship: as the rate of apprehension $(\lambda)$ increases, the time required for the scammer population to collapse to near-zero levels decreases dramatically. For instance, with a relatively low enforcement rate of $\lambda = 0.04$, the scammer population persists for over $600$ days before becoming negligible. In contrast, when the enforcement rate is increased to $\lambda = 0.07$, the population is effectively eradicated in fewer days. Figure~\ref{fig_mu} illustrates the effect of voluntary scammer removal and the longevity of the active scammer population, governed by the parameter $\mu$. Both $\mu$ and $\lambda$ reduce the active scammer population, making their effects qualitatively similar. However, $\mu$ demonstrates a significantly greater impact on the scam-victim dynamics for the population. For instance, a $\mu$ value of $0.003$ eliminates the scammer population in approximately $600$ days, an outcome that requires a much larger $\lambda$ value of approximately $0.04$, as shown in Figure~\ref{fig_lambda}. 
\begin{figure}[htpb!]
	\centering
    \includegraphics[width=0.45\textwidth]{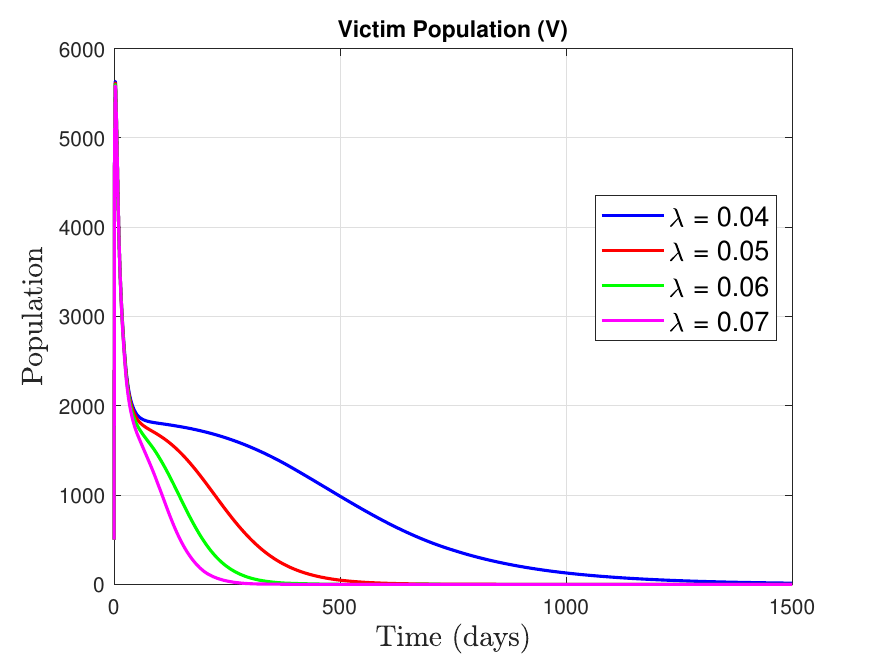}
    \includegraphics[width=0.45\textwidth]{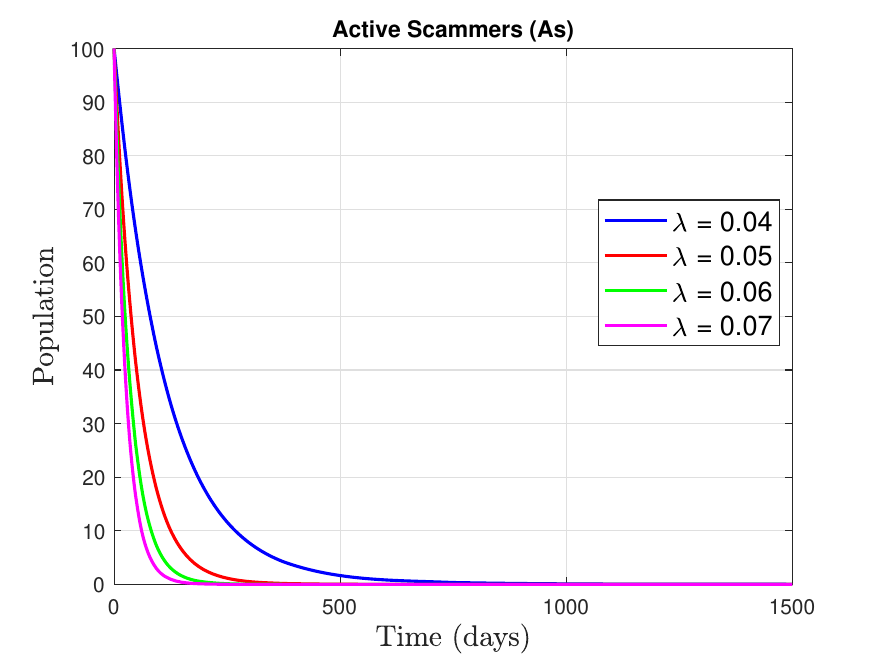}
	\caption{Parametric significance of the rate at which scammers are caught for the populations of victims and active scammers}
    \label{fig_lambda}	
\end{figure}
\begin{figure}[htpb!]
	\centering
    \includegraphics[width=0.45\textwidth]{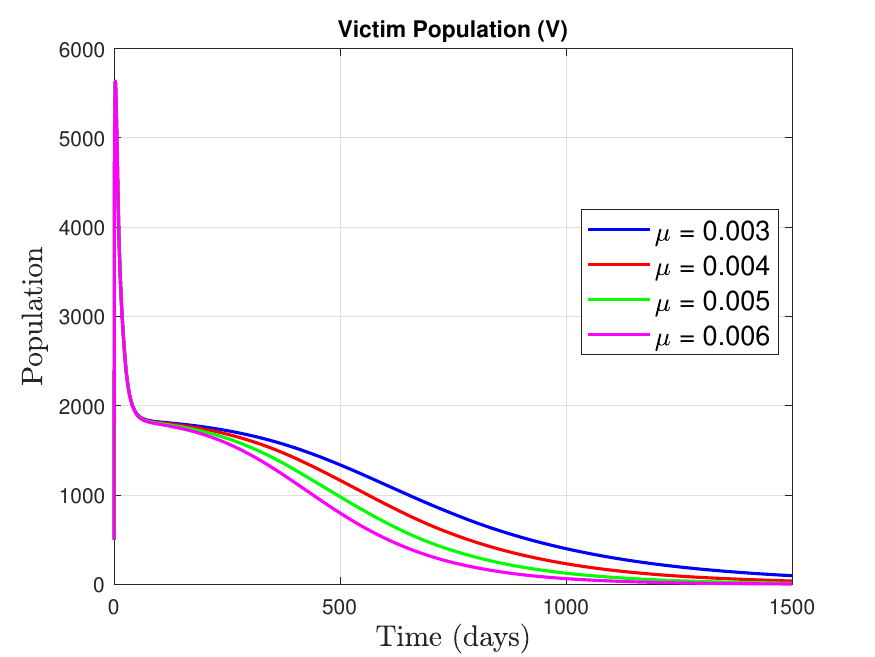}
    \includegraphics[width=0.45\textwidth]{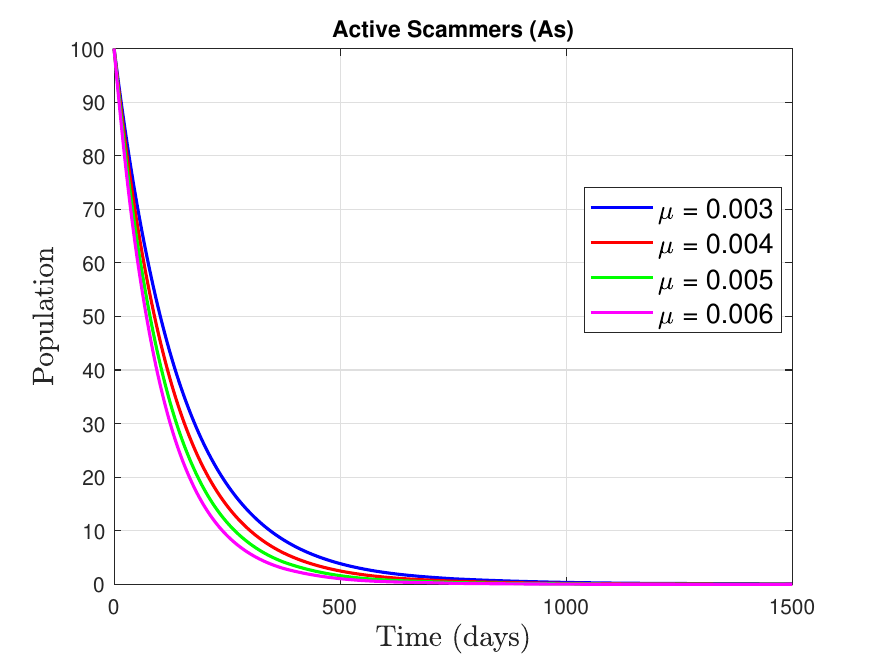}
	\caption{Parametric significance of the scammer attrition rate for the victim and active scammer populations}
    \label{fig_mu}	
\end{figure}
From a physical standpoint, this demonstrates that the internal stability of the scamming enterprise is highly sensitive to the turnover rate of its population. Factors that increase $\mu$, such as the psychological stress of the activity, diminishing returns, social programs, economic incentives for legitimate work, public awareness, or the availability of viable legitimate alternatives, can be as effective at dismantling the criminal network as external pressures like law enforcement or death. The model suggests that making scamming an unappealing or unsustainable career is a powerful internal lever for its eradication. Figure~\ref{fig_delta} provides a parametric analysis of the scammer recruitment rate, $\delta$. This parameter represents the ability of the scammers' population to grow, for instance, through the recruitment of new members from the overall population, independent of the victim population. The simulations explore how variations in this recruitment efficacy influence the temporal dynamics of both the victim and active scammer populations. The results in Figure~\ref{fig_delta} show a pronounced relationship: a higher recruitment rate directly correlates with increased persistence of the scammer population. While all previously simulated scenarios culminate in the eventual collapse of the scammer population, a higher $\delta$ value significantly delays this outcome. For example, a low recruitment rate of $\delta = 0.010$  results in a rapid decay, whereas a higher rate of $\delta = 0.035$ allows the scammer population to remain active for a substantially longer period.
\begin{figure}[htpb!]
	\centering
    \includegraphics[width=0.45\textwidth]{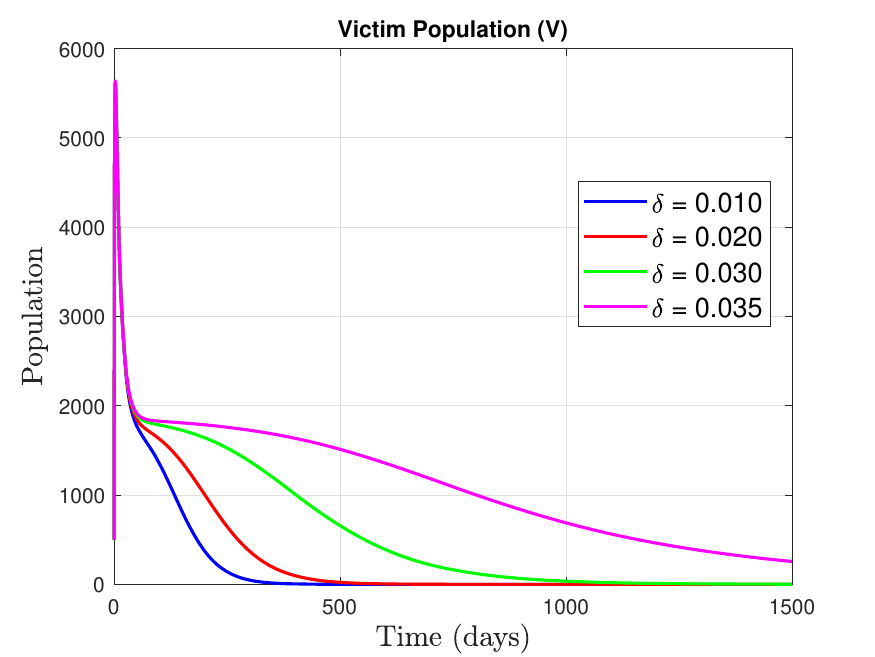}
    \includegraphics[width=0.45\textwidth]{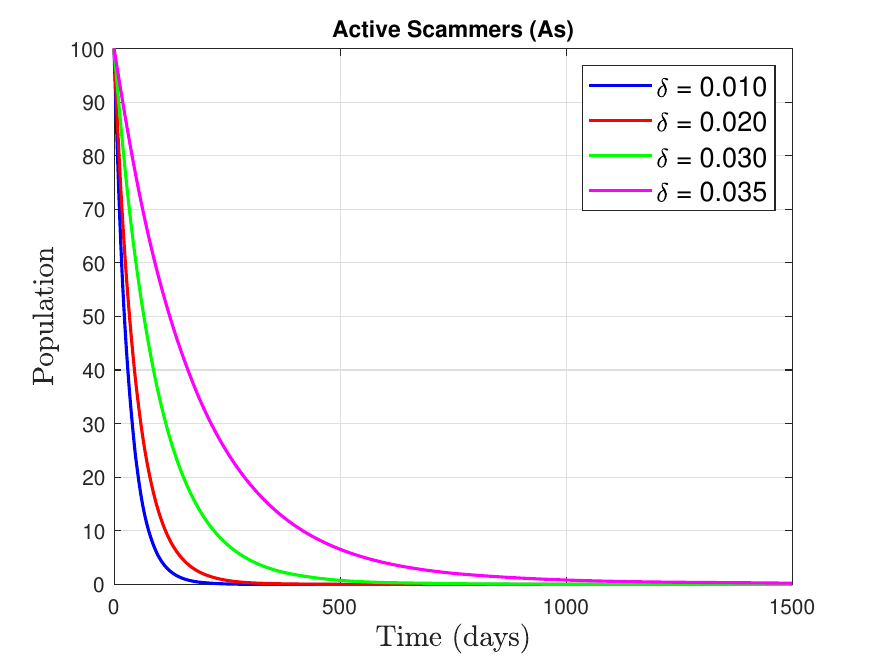}
	\caption{Parametric significance of the scammer recruitment rate for the victim and active scammer populations.}
    \label{fig_delta}	
\end{figure}
\begin{figure}[htpb!]
	\centering
    \includegraphics[width=0.45\textwidth]{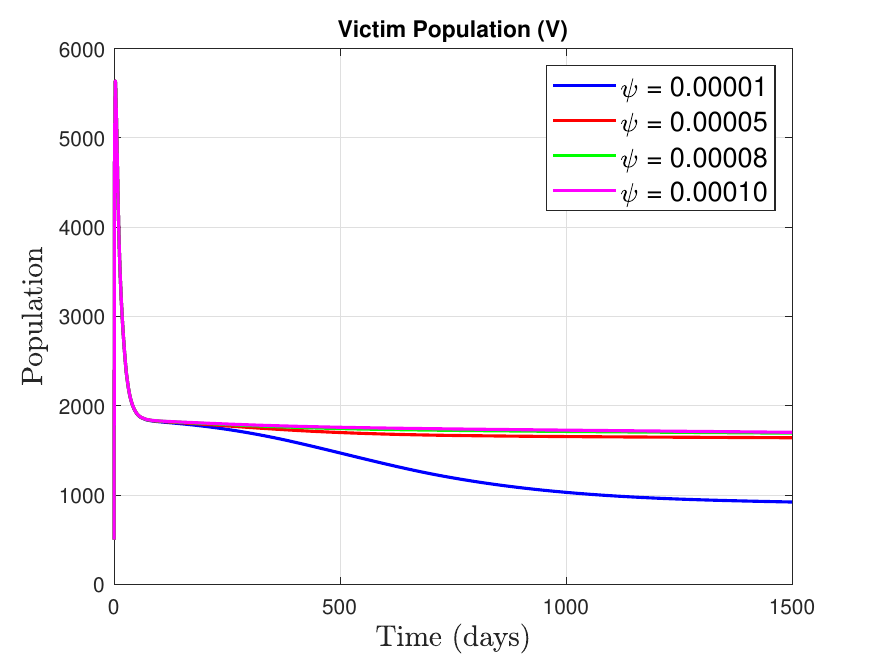}
    \includegraphics[width=0.45\textwidth]{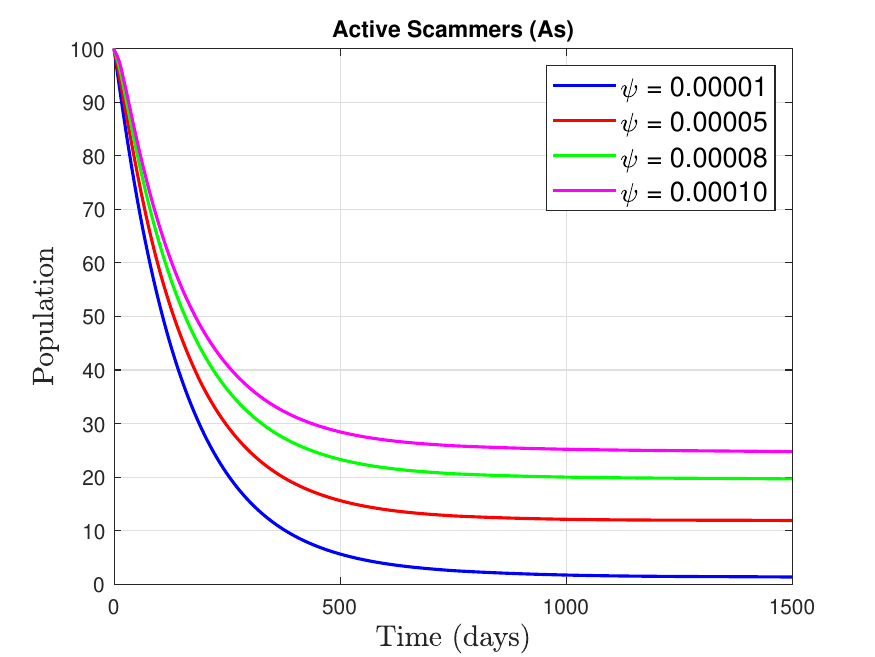}
	\caption{Parametric significance of the rate at which victims transition to scammers for the populations of victims and active scammers}
    \label{fig_psi}	
\end{figure}
A higher $\delta$ value not only prolongs the existence of the active scammer population but also, as a direct result, extends the period during which individuals are victimized. Consequently, the total time over which the society experiences a significant victim population is extended. This study suggests that intervention efforts should focus on increasing scammer removal rates and disrupting their recruitment pathways (i.e., decreasing $\delta$). Policies and actions aimed at reducing $\delta$ value, such as monitoring and shutting down recruitment channels, or addressing the socioeconomic factors that make scamming an attractive option, could critically undermine the resilience of the scammer population. By reducing the scammers' ability to recruit new members, the active scammer population decline can be accelerated, thereby shortening the duration of harm experienced by the victim population. Unlike earlier simulations where the scammer population invariably collapsed to zero, varying $\psi$ demonstrates the potential for the scammer population to achieve a non-zero, stable equilibrium as seen in Figure~\ref{fig_psi}. The graph in Figure~\ref{fig_psi} shows a clear and strong positive correlation where a higher victim-to-scammer transition $\psi$ results in a more sustained population of active scammers. The results also depict that a higher victim-to-scammer transition rate $\psi$ leads to a lower equilibrium level of active victim population. A superficial observation of a decreasing victim population could be misinterpreted as a positive trend, when in fact it might be symptomatic of a more dangerous systemic shift towards a self-perpetuating scamming system. Measures aimed at providing victims support, counseling, and alternative pathways are crucial to keep $\psi$ low, thereby ensuring that the scamming operation remains fundamentally unsustainable and preventing its becoming an endemic scenario.
\begin{figure}[htpb!]
	\centering
    \includegraphics[width=0.45\textwidth]{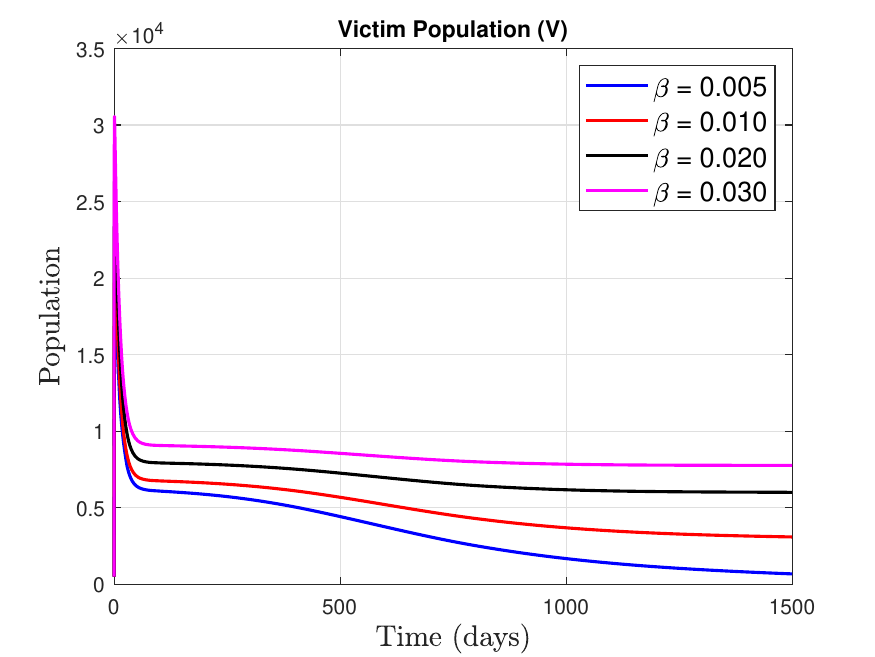}
    \includegraphics[width=0.45\textwidth]{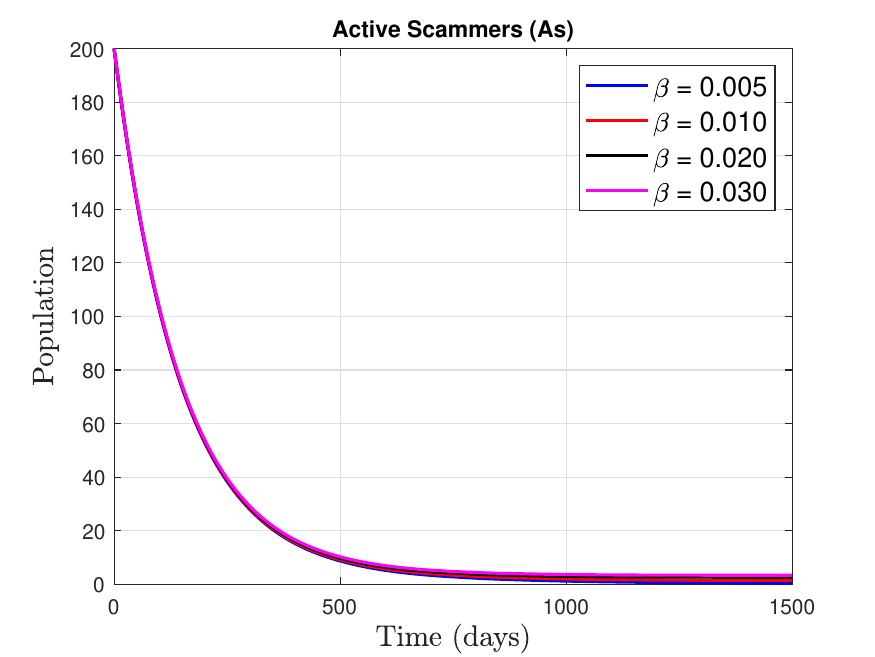}
	\caption{Parametric significance of the rate at which susceptibles fall victim for the populations of victims and active scammers.}
    \label{fig_beta}	
\end{figure}
The parameter $\beta$, which quantifies the rate at which susceptible individuals move into the victim compartment, is discussed in Figure~\ref{fig_beta}. $\beta$ is a fundamental parameter, as it represents the primary mechanism of scamming within the scam-victim dynamics. Figure~\ref{fig_beta} depicts a direct and intuitive relationship between the victimization rate and the size of the victim population. The figure shows that a higher value of $\beta$ results in a significantly higher steady-state (equilibrium) population of victims. Following an initial sharp drop as the system adjusts from its initial conditions, each simulation converges to a stable victim population over time. The susceptible-victim rate $\beta$ has a negligible effect on the long-term fate of the active scammer population. The simulations show that regardless of how effective the scams are, the scammer population undergoes a rapid collapse from its initial state of 200 individuals, approaching zero over the same timescale in all scenarios.

\subsection{Sensitivity analysis of the model}\label{SA_sect}
In the mathematical modelling of complex dynamical systems, sensitivity analysis plays a pivotal role in understanding how variations in model parameters influence the behavior of the system outputs. Since most models are constructed based on parameters estimated from empirical data, drawn from literature, or assumed based on theoretical considerations, there is an inherent uncertainty associated with these parameter values. Sensitivity analysis provides us with a systematic approach for quantifying the impact of such uncertainties on model predictions, or in this case, quantifying the effects of the model parameters on the model output. 

Sensitivity analysis can generally be classified into two broad categories, namely, global and local sensitivity analyses. Global sensitivity analysis explores the influence of parameters over a wide range of values, often requiring extensive computational effort, and is suited for identifying cross-interaction and non-linear effects across the entire parameter space. In contrast, local sensitivity analysis focuses on quantifying the behavior of the model output in the immediate neighborhood of a nominal set of parameter values. This is done by computing the partial derivatives of the model output with respect to each parameter, thus quantifying the rate of change of the output due to an infinitesimally small perturbation in the individual parameters around the nominal values of the parameter, while keeping all other parameters fixed. The main advantage of the local sensitivity analysis lies in its simplicity and interoperability, as it gives an understanding of which parameters the model is most sensitive to at a given point in the parameter space. 

\subsubsection{Local sensitivity analysis of \texorpdfstring{$R_0$}{R0}}
In this section, we look at the sensitivity of the basic reproduction number in \eqref{eq:R0}, which represents the potential for the spread of scamming activities within the population, 
\begin{align}
    \mathcal{R}_{0} = \frac{\beta N \psi}{(\gamma + \psi)(\mu + \lambda - \delta)},
\end{align}
where $\beta, \psi, \gamma, \mu, \lambda$ and $\delta$ are model parameters defined in Table~\ref{tab:paramdescr}. To perform the local sensitivity analysis, we compute the partial derivative of $R_0$ with respect to each of the parameters, and evaluate at the nominal values, i.e., the fitted values in Table~\ref{tab:paramdescr}. These derivatives measure the instantaneous rate of change of $R_0$ in response to small changes in each parameter. However, for ease of reference and interoperability, we compute the normalized sensitivity indices of $R_0$ with respect to the parameters. The normalized sensitivity index of $R_0$ with respect to a parameter, say $p$, is defined as 
\begin{align}
    S_p = \frac{\partial \mathcal{R}_0}{\partial p} \cdot \frac{p}{\mathcal{R}_0}.
\end{align}
This dimensionless quantity represents the percentage change in $\mathcal{R}_0$ per percentage change in $p$, which gives a proportional effect of each parameter on $\mathcal{R}_0$. A positive sensitivity index means that increasing the parameter leads to an increase in $\mathcal{R}_0$, while a negative index implies that an increase in the parameter results in a decrease in $\mathcal{R}_0$. For the $\mathcal{R}_0$ under consideration, the normalized local sensitivity indices with respect to the parameters $\beta, \psi, \gamma, \mu, \lambda$ and $\delta$ are, respectively, defined as follows:
\begin{equation}
        S_\beta = 1, \quad S_\psi = \frac{\gamma}{\gamma + \psi}, \quad S_\gamma = -\frac{\gamma}{\gamma + \psi}, \quad S_\mu = - \frac{\mu}{\mu + \lambda - \delta}, \quad S_\lambda = -\frac{\lambda}{\mu + \lambda - \delta}, \quad S_\delta = \frac{\delta}{\mu + \lambda - \delta}.
\end{equation}

                                                                                                                                                                                                                                                                                                                                                                                            We evaluate these indices at the normalized local sensitivity index with respect to each parameter at the nominal values (fitted values in Table~\ref{tab:paramdescr}), and present the results in Table~\ref{tab:sensitivityindex}. The sensitivity of $R_0$ with respect to the rate at which the susceptible population falls victim ($\beta$) is exactly one, indicating that the potential for the proliferation of scamming activities is linearly proportional to the susceptibility rate of the population. A $1\%$ increase in $\beta$ leads to a $1\%$ increase in $R_0$, confirming that the susceptibility rate is a direct driver of scam propagation in the population. Interestingly, the similarity index of $R_0$ with respect to $\psi$, the rate at which scam victims joined active scammers, is nearly equal to $1$, despite the fact that the nominal value of $\psi$ is extremely small. This result suggests that even minimal rates of victim defection into scamming activities can have a disproportionately large impact on the potential for scam activities or scam propagation. The model structure magnifies the influence of $\psi$ because it appears in both the numerator and denominator of $R_0$, albeit in slightly different roles. Most notably, the parameters, $\mu, \lambda$, and $\delta$, all associated with scammer dynamics, have the largest magnitude sensitivity indices. In particular, $\delta$, the scammer recruitment rate, has the most dominant influence on $R_0$, with a sensitivity index of $4.0267$. This implies that a modest increase in recruitment quadruples the potential for scamming activities, highlighting recruitment as the primary driver of scam proliferation. Conversely, increases in scammer dropout rate, $\mu$, and the arrest rate $\lambda$ have substantial negative effects on the proliferation of scamming activities, with sensitivity indices of $-2.5586$ and $-2.4681$, respectively. These findings suggest that interventions aimed at disincentivizing continued scam involvement, such as through social rehabilitation and job alternatives, or enhancing law enforcement through arrest and prosecution, are among the most effective strategies for reducing scam activities. The sensitivity index of $R_0$ with respect to $\gamma$ is very close to $-1$, which means that a $1\%$ increase in $\gamma$ leads to a $\approx 1\%$ decrease in $R_0$. Mathematically, the near-linearity (index near $\approx 1$) reflects the fact that $\gamma \gg \psi$, so $-\sfrac{\gamma}{(\gamma+\mu)} \approx -\sfrac{\gamma}{\gamma} = -1$. This makes $\gamma$ one of the most influential control levers in the model, as it dictates how many victims escape the scam cycle permanently, thereby either increasing the population that is resistant to scamming activities or reducing the population that can be converted to scammers, inevitably, reducing scamming activities.

\begin{table}[h!]
\centering
\captionsetup{justification=centering,,margin=1cm}
\caption{Normalized local sensitivity indices of $R_0$ with respect to $\beta, \psi, \gamma, \mu, \lambda$ and $\delta$ evaluated at the nominal (fitted) values}
 \begin{tabular}{llll}
	\hline \hline
	Parameter & Description  & Nominal value & Sensitivity index \\ \hline \hline
	$\beta$ & Susceptibility rate & $0.008425$ & $+1.0000$ \\
    $\psi$ & Victim defection rate & $0.000004$ & $+0.9999$ \\
    $\gamma$ & Insusceptibility rate & $0.059366$ & $-0.9999$ \\
    $\mu$ & Scammer dropout rate & $0.016590$ & $-2.5586$ \\
    $\lambda$ & Scammer arrest rate & $0.016003$ & $-2.4681$ \\
    $\delta$ & Scammer recruitment rate & $0.026109$ & $+4.0267$ \\
	\hline\hline
\end{tabular}
\label{tab:sensitivityindex}
\end{table}

These results underline the critical role of scammer behavior and transitions in shaping the dynamics of the deterministic model. While traditional preventive strategies may focus on reducing victimization rate, $\beta$, this analysis shows that the most effective leverage point lies in scammer influx and retention. Reducing the scammer recruitment rate, increasing the rate at which scammers quit, or improving the rate at which scammers are arrested and prosecuted are all more potent at controlling the potential for scamming activities than interventions on the victim side. Nevertheless, mechanisms, such as awareness, digital literacy, and scam education programs, that help victims learn and avoid scams in the future can almost proportionally reduce the chance of scamming activities happening. So, investing in victim recovery and resilience is as crucial as enforcement or prevention efforts. Furthermore, the high sensitivity of $R_0$ to $\psi$, even at small values, indicates that preventing the defection of victims to active scammer roles should also be equally the focus of public awareness and support programs. 

\subsubsection{Global analysis of victims and active scammers populations}
Among the global sensitivity analysis techniques, the partial rank correlation coefficient (PRCC) is widely used to quantify the monotonic relationship between model inputs and outputs while controlling for the effects of other parameters. Unlike the local sensitivity indices, PRCC is non-parametric and robust against model nonlinearities. It measures the correlation between the ranks of each input parameter and the model output of interest, partialling out the influence of other covariates through regression. The resulting coefficients indicate the direction and strength of the correlation, while the accompanying $p$-values assess the statistical significance of the correlation coefficients. In this analysis, PRCC was applied to two key variables, the accumulation of active scammers and victims, over a period, $\Omega$, defined, respectively, as
\begin{align}
    A_s^\dagger = \int_\Omega A_s(t) dt \quad \text{and} \quad V^\dagger = \int_\Omega V(t) dt.
\end{align}

The analysis was conducted using $2500$ latin hypercube samples spanning plausibly relevant ranges presented in Table~\ref{tab:prcc}, along with the correlation coefficients of $A_s^\dagger$ and $V^\dagger$ with respect to $\beta, \gamma, \sigma, \delta, \mu, \lambda$ and $\psi$. For this analysis, we take $\Omega$ to span $1520$ days, which is approximately $4$ years and $52$ days. 

\begin{table}[h!]
\centering
\captionsetup{justification=centering, margin=1cm}
\caption{Partial rank correlation coefficients of $A_s^\dagger$ and $V^\dagger$ with respect to $\beta, \gamma, \sigma, \delta, \mu, \lambda$, and $\psi$.}
\label{tab:prcc}
\begin{tabular}{llcccc}
\hline\hline
 &  & \multicolumn{2}{c}{$A_s^\dagger$} & \multicolumn{2}{c}{$V^\dagger$} \\ \cline{3-4} \cline{5-6} 
   Parameter       &   Range    & PRCC & $p$-value & PRCC & $p$-value \\
\hline\hline
 $\beta$   & $[0, 0.1]$  & $+0.0917$ &  $4.35 \times 10^{-6}$ & $+0.1133$ &  $1.33 \times 10^{-8}$ \\
 $\gamma$ & $[0,0.1]$ & $-0.0772$ & $1.11 \times 10^{-4}$ & $-0.0942$ & $2.39 \times 10^{-6}$ \\ 
$\sigma$ & $[0,0.2]$ & $+0.1159$ & $6.25 \times 10^{-9}$ & $0.1415$ & $1.19 \times 10^{-12}$ \\
$\delta$ & $[0, 0.1]$ & $+0.8086$ & $0.0000$ & $+0.0178$ & $0.3732$ \\
$\mu$ & $[0,0.1]$ & $-0.8020$ &  $0.0000$ &  $+0.0170$ & $0.3958$ \\
$\lambda$ & $[0,1.0]$ & $-0.9820$ & $0.0000$ &  $-0.0481$ & $1.62 \times 10^{-2}$ \\
$\psi$ & $[0, 0.1]$ & $+0.2159$ & $9.53 \times 10^{-28}$ & $-0.9859$ & $0.0000$ \\
\hline \hline
\end{tabular}
\end{table}

\begin{figure}[htpb!]
	\centering
    \includegraphics[width=\textwidth]{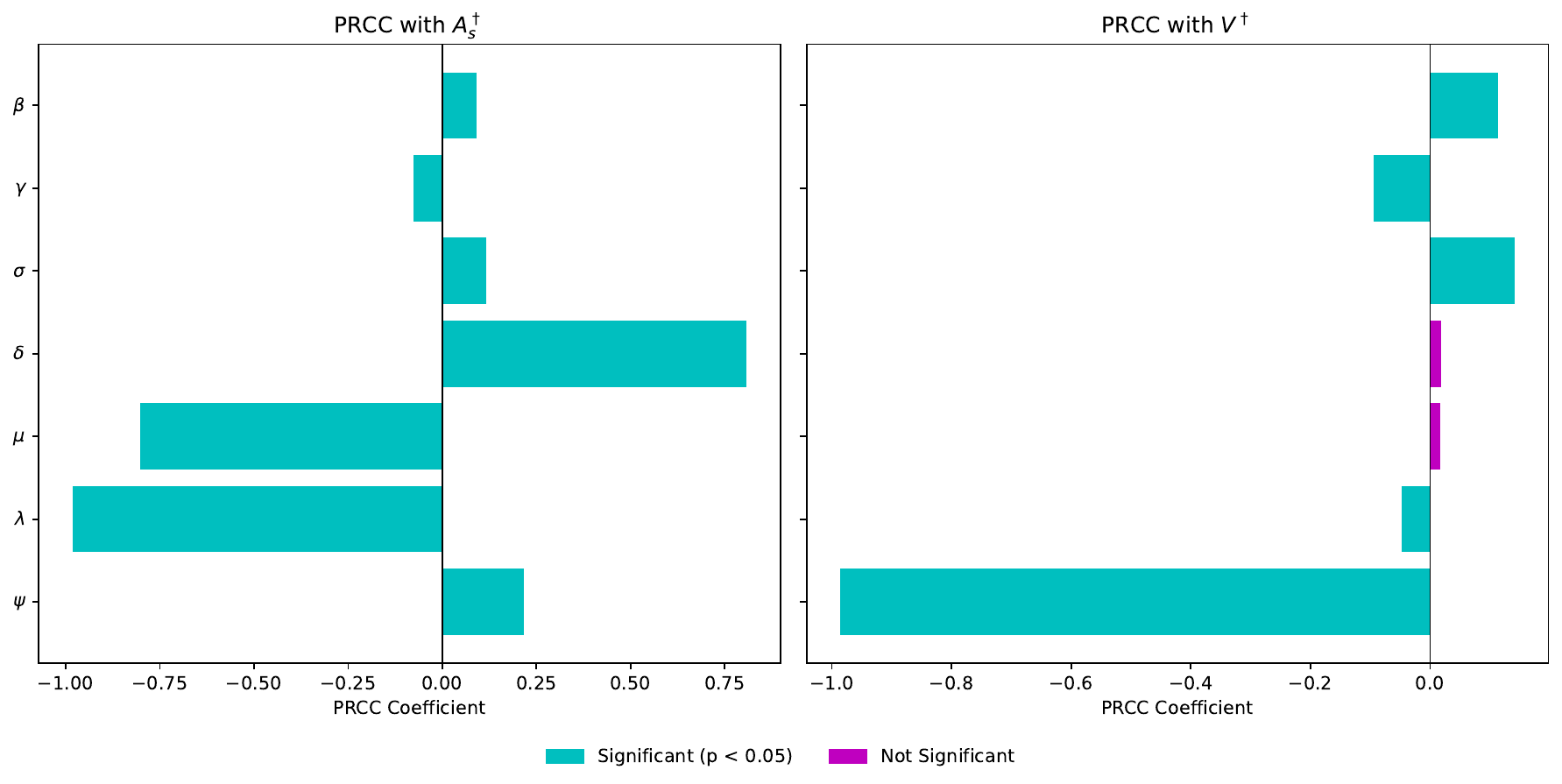}
	\caption{Partial rank correlation coefficients (PRCC) of model parameters with respect to accumulated active scammers ($A_s^\dagger$) and accumulated victim count ($V^\dagger$). Parameters with statistically significant influence ($p < 0.05$) are shown in cyan, while non-significant parameters are shown in magenta.}
    \label{fig_prcc_plot}	
\end{figure}

From the results presented in Table~\ref{tab:prcc} and Figure~\ref{fig_prcc_plot}, the most influential parameters on accumulated active scammers are:
\begin{itemize}
    \item $\delta$ (recruitment rate): showing a strong positive correlation (PRCC $=+0.8086$), indicating that higher recruitment directly increases scammer proliferation. 
    \item $\mu$ (dropout rate): showing very strong negative correlation $(-0.8020)$, implying that improving scammer exit substantially reduces the scammers population, and by extension, scamming activities. 
    \item $\lambda$ (arrest rate): shows the most significant negative correlation $(-0.9820)$, suggesting that effective law enforcement almost linearly suppresses the total number of active scammers.
\end{itemize}

The rate at which some former victims joined active scammers, $\psi$, has a moderate positive influence $(+0.2159)$, indicating that increased defection of victims into scammer roles contributes meaningfully to the increase in the number of active scammers and invariably to the growth of scamming activities. Although statistically significant, parameters such as $\beta, \gamma, \sigma$ have weak correlation, consistent with more indirect roles in the long-term dynamics of the number of active scammers and/or scamming activities. However, the most influential parameter on the accumulated victims during the period considered is $\psi$, which has an extremely negative correlation $(-0.9859)$, indicating that as more victims defect into scamming, the victim compartment depletes drastically, likely due to a shift of population from victim to scammer class. On the one hand, parameters $\beta$ and $\sigma$ show a moderate positive influence on total victims. As expected, increased susceptibility and loss of resistance increase the number of victims. $\gamma$, on the other hand, has a moderate negative impact, consistent with the protective effect of gaining resistance against scammers. Finally, $\delta, \mu, \lambda$ have small or insignificant correlations with the accumulation of victims. This suggests that scammer turnover dynamics has little direct effect on victim pool sizes beyond their influence on scam propagation rates. 

The PRCC results align closely with the normalized sensitivity analysis of $R_0$, but provide a deeper insight into the time-accumulated impact of each parameter on the active scammer and victim population groups. The key takeaways from these analyses include, but are not limited to, the following:
\begin{itemize}
    \item enforcement and scammer attrition are critical for suppressing the prevalence of active scammers, and
    \item psychosocial and behavioral interventions that prevent victim defection (very low $\psi$) are very important for protecting vulnerable populations. 
\end{itemize}

\section{Conclusion}\label{Conc_sect}
The rise of cybercrime has led to an alarming increase in scams, with cyber-enabled fraud becoming increasingly prevalent and sophisticated, leaving many vulnerable to financial and personal losses. This study proposed a continuous deterministic model of scam-victim dynamics. Following an epidemiological approach, the dynamical model divides the population into five compartments: susceptible, victim, recovered, active scammer, and recovered scammer. The theoretical properties of the model are analyzed, revealing that the system is positive-definite, bounded, and exhibits a transcritical bifurcation when the basic reproduction number, $R_0$, is equal to one. We propose a structure-preserving non-standard finite difference (NSFD) scheme to ensure the positivity of the model solutions. Furthermore, sensitivity analysis (PRCC) is conducted to assess the model's robustness and determine the influence of each parameter on the system dynamics. The findings of this study include, but are not limited to, the following:
\begin{itemize}
    \item Effective enforcement measures and a high rate of scammer attrition are essential to mitigate the prevalence of active scammers.
    \item The phenomenon of victims becoming scammers, even at low rates, contributes disproportionately to the expansion and persistence of fraudulent activities.
    \item Increasing or decreasing the victim recovery rate does not affect the active scammer population within the system.
    \item Scammers who choose to quit on their own have a greater impact on reducing overall scam activity than removing them through enforcement actions.
    \item The rate of individual susceptibility is a primary determinant of the dynamics of propagation of scams within a given population. Essentially, the influx of newly vulnerable targets directly fueled the expansion of fraudulent activity.
\end{itemize}
A key recommendation from this study is to broaden intervention efforts beyond simply enhancing the scammers' removal rate, also to include dismantling their recruitment pipelines (i.e, by decreasing the rate of $\delta$) and sensitization of the victims (i.e, by reducing the rate of $\psi$). Although the present model serves as a crucial first step in quantifying the interplay between scammers and victims, we acknowledge that it constitutes a simplified representation of a profoundly complex system. Consequently, our future work will be directed towards incorporating more compartmental scenarios and control strategies, including applying optimal control theory to determine the most cost-effective allocation of resources over time.

\section*{Conflict of interest}
\noindent The authors confirm that there are no known financial interests or personal relationships that could be perceived as a potential conflict of interest in relation to the research presented in this article. 

\section*{Authors contribution}
All authors contributed equally to this manuscript. 

\section*{Data and code availability}
The data and code associated with this article can be found on Zenodo \cite{tijani2025canadian} and \href{https://github.com/hamfat/Phishing}{{Phishing GitHub repository}}, respectively.

\end{document}